\numberwithin{equation}{section}
\newtheorem{theorem}{Theorem}[section]
\newtheorem{lemma}[theorem]{Lemma}
\newtheorem{proposition}[theorem]{Proposition}
\newtheorem{corollary}[theorem]{Corollary}
\theoremstyle{definition}
\newtheorem{definition}[theorem]{Definition}
\theoremstyle{remark}
\newtheorem{remark}[theorem]{Remark}
\newtheorem{fact}[theorem]{Fact}
\newtheorem{example}[theorem]{Example}
\newtheorem{observation}[theorem]{Observation}
\newtheorem{question}[theorem]{Question}
\newtheorem{acknowledgement}{Acknowledgement}
\newcommand{\Ass}{\operatorname{Ass}}
\newcommand{\grade}{\operatorname{grade}}
\newcommand{\Soc}{\operatorname{Soc}}
\newcommand{\Assh}{\operatorname{Assh}}
\newcommand{\Spec}{\operatorname{Spec}}
\newcommand{\rad}{\operatorname{rad}}
\newcommand{\Ht}{\operatorname{ht}}
\newcommand{\pd}{\operatorname{p.dim}}
\newcommand{\UFD}{\operatorname{UFD}}
\newcommand{\HH}{\operatorname{H}}
\newcommand{\nil}{\operatorname{nil}}
\newcommand{\e}{\operatorname{e}}
\newcommand{\V}{\operatorname{V}}
\newcommand{\Ext}{\operatorname{Ext}}
\newcommand{\Hom}{\operatorname{Hom}}
\newcommand{\Ann}{\operatorname{Ann}}
\newcommand{\zd}{\operatorname{zd}}
\newcommand{\depth}{\operatorname{depth}}
\newcommand{\U}{\operatorname{U}}
\newcommand{\lo}{\longrightarrow}
\newcommand{\fm}{\mathfrak{m}}
\newcommand{\fp}{\mathfrak{p}}
\newcommand{\fq}{\mathfrak{q}}
\newcommand{\fa}{\mathfrak{a}}
\newcommand{\fn}{\mathfrak{n}}
\begin{document}
	
	\author[]{Mohsen Asgharzadeh}
	
	\address{}
	\email{mohsenasgharzadeh@gmail.com}

	\title[ ]
	{a note on a system of parameters}

	\subjclass[2010]{ Primary 	13C15}
	\keywords{Associated prime ideals; limit closure;  multiplicity;  quasi-Gorenstein rings; system of parameters}
	
	\begin{abstract}
	Let  $u$ be in $\fp\in\Assh(R)$. We present several  situations
for which $(0 : u)$ is (not) in an ideal generated by a system of parameters.
An application is given.
\end{abstract}

\maketitle

\section{Introduction}

Let  $(R,\mathfrak m,k)$ be a noetherian   local ring of dimension $d$. We say a sequence
$\underline{x}:=x_1,\ldots,x_d$ of elements of $\fm$ is a system of parameters
if   $\ell( R/\underline{x}R)<\infty $. By $I$ we mean the ideal  generated by a system of parameters.
By $\Assh(R)$ we mean $\{\fp\in\Ass(R):\dim R/\fp=d\}$. Let   $\fp\in\Assh(R)$ and take $u$ be  in $\fp$.  

\begin{question}(See \cite[Question 6.4]{fh}) Can $(0 : u)$ ever
be in  $I$?
\end{question}

 For the motivation,  see   \cite[Introduction]{fh} by Fouli and Huneke.
Their calculations   strongly
suggest the answer is always `no'. For instance, over  $1$-dimensional rings. Also, over Gorenstein rings the answer is  no, because of validity of the monomial conjecture, see Fact \ref{gor}. 
We extend this by dropping the Cohen-Macaulay assumption: 

\begin{observation}
	Question 1.1  is not true over quasi-Gorenstein rings.
\end{observation}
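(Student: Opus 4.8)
The plan is to reduce to a complete ring and then pit the canonical element of $H^{d}_{\fm}(R/uR)$ against the monomial conjecture, invoking quasi-Gorensteinness only through the isomorphism $\omega_{\widehat R}\cong\widehat R$. First I would clear away the trivial cases: if $u=0$ then $(0:u)=R\not\subseteq I$, while if $d=0$ then $I=(0)$ and $(0:u)\supseteq(0:\fm)=\Soc(R)\neq 0$; so assume $u\neq 0$ and $d\ge 1$. Then I pass to the completion: $(0:_{R}u)\subseteq I$ if and only if $(0:_{\widehat R}u)\subseteq I\widehat R$ (faithful flatness), the ideal $I\widehat R$ is again generated by a system of parameters, and $\dim\widehat R/u\widehat R=\dim R/uR=d$, the last equality because $u\in\fp$ with $\dim R/\fp=d$. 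Hence we may assume $R$ is complete.

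Suppose, for a contradiction, that $(0:u)\subseteq I=(x_{1},\dots,x_{d})$. Write $H^{d}_{\fm}(R/uR)=\varinjlim_{t}(R/uR)/(x_{1}^{t+1},\dots,x_{d}^{t+1})$ and let $\eta$ be the canonical element, the image of $1$ from the stage $t=0$. Since $\dim R/uR=d$, the images $\overline{x_{1}},\dots,\overline{x_{d}}$ form a system of parameters of $R/uR$, so by the monomial conjecture --- a theorem of Hochster in equal characteristic and of Andr\'e in general --- we have $\eta\neq 0$. On the other hand, any $R$-homomorphism $\phi\colon R/uR\to R$ sends $\overline{1}$ into $\Hom_{R}(R/uR,R)=(0:_{R}u)\subseteq I$, so on the direct limits the induced map $H^{d}_{\fm}(\phi)\colon H^{d}_{\fm}(R/uR)\to H^{d}_{\fm}(R)$ carries $\eta$ to the class of $\phi(\overline{1})\in(x_{1},\dots,x_{d})$, which is $0$. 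Thus $\eta\in\bigcap_{\phi}\ker H^{d}_{\fm}(\phi)$, the intersection taken over all $\phi\in\Hom_{R}(R/uR,R)$.

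It remains to see that this intersection is $0$, and this is the only step where the hypothesis is used. As $R$ is complete quasi-Gorenstein, $\omega_{R}\cong R$, and since $H^{d}_{\fm}(-)$ is right exact and Matlis duality $(-)^{\vee}$ is exact, the identification $H^{d}_{\fm}(R)^{\vee}\cong\omega_{R}$ propagates, via a finite presentation, to a natural isomorphism $H^{d}_{\fm}(N)^{\vee}\cong\Hom_{R}(N,\omega_{R})$ for every finitely generated $N$. With $N=R/uR$ this reads $H^{d}_{\fm}(R/uR)^{\vee}\cong\Hom_{R}(R/uR,R)=(0:_{R}u)$, and under it $\ker H^{d}_{\fm}(\phi)=\{\,\xi\in(0:_{R}u)^{\vee}:\xi(\phi(\overline{1}))=0\,\}$. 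As $\phi$ ranges over $\Hom_{R}(R/uR,R)$, the element $\phi(\overline{1})$ ranges over all of $(0:_{R}u)$, so the intersection is the set of $\xi$ killing $(0:_{R}u)$, namely $0$. Hence $\eta=0$, a contradiction. I expect the delicate point to be the naturality of this top-degree local-duality isomorphism and its compatibility with the maps $H^{d}_{\fm}(\phi)$; granting that, the rest is formal bookkeeping, and the same argument, with $\omega_{R}=R$ coming instead from the Gorenstein hypothesis, reproduces Fact~\ref{gor}.
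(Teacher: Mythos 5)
Your argument is correct and is essentially the paper's proof (Proposition \ref{quasi}) read on the other side of Matlis duality: both reduce to the complete case, both obtain the contradiction from the nonvanishing of the canonical element of $\HH^{d}_{\fm}(R/uR)$ via the monomial/canonical element theorem (Andr\'e), and both use quasi-Gorensteinness only through $\HH^{d}_{\fm}(R)\cong E_R(k)$ together with the identification $\Hom_R(R/uR,R)=(0:u)$. The one substantive difference is that the paper tracks the limit closure and so proves the sharper statement $(0:u)\nsubseteq I^{\lim}$, whereas your version kills the class of $\phi(\overline{1})$ already at stage $t=0$ and hence only rules out $(0:u)\subseteq I$ --- which is all the Observation requires.
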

It is easy to see that  Question 1.1  is not true in each of the following  three situations:	 i)    $\frac{k[[ X,Y,Z]]}{J}$ for some unmixed ideal $J$,
	 ii)   Cohen-Macaulay rings of multiplicity two, and
	 iii)  $\frac{k[[ X_1,\ldots,X_n]]}{(f,g)}$ where $(f,g)$ is unmixed. 
	 Due to Observation 1.2, we pay a special attention  to non quasi-Gorenstein rings with nontrivial zero-divisors. This enable us
to check Question 1.1 in some new cases. Here, is a sample:

\begin{observation}
	Let $P$ and $Q$ be two prime ideals of $S:= k[[X_1,\ldots,X_d]]$ generated by linear forms.  Then Question 1.1    is not true over $R:=\frac{S}{PQ}$.
\end{observation}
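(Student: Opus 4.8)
The plan is to prove the statement in the strong form in which it is meant: for \emph{every} $\fp\in\Assh(R)$, every $u\in\fp$, and every ideal $I$ generated by a system of parameters of $R$, one has $(0:_Ru)\not\subseteq I$. First I would normalize the situation. After a linear change of coordinates I may assume $\Ht P=a\le \Ht Q=b$, and since $\sqrt{PQ}=P\cap Q$ this gives $\dim R=d-a$. Every $\fq\in\Ass(S/PQ)$ contains $PQ$, hence contains $P$ or $Q$; and in the regular (in particular catenary) domain $S$ a prime strictly larger than $P$ (resp.\ $Q$) has strictly smaller coheight. Therefore the only candidates for membership in $\Assh(R)$ are $\overline{P}:=PR$ and $\overline{Q}:=QR$, with $\overline{Q}$ occurring precisely when $a=b$; that these primes are genuinely associated is the routine computation $(PQ:_Sw)=P$ for $w\in Q\setminus P$, and symmetrically. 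If $P=(0)$ or $Q=(0)$ then $R=S$ is a domain and the only admissible $u$ is $0$, for which $(0:0)=R\not\subseteq I$; so from now on $a\ge 1$.

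The decisive case is $\fp=\overline{P}$ with $u\in P$. Since $u\in P$ we have $uQ\subseteq PQ$, hence $Q\subseteq(PQ:_Su)$, that is, $QR\subseteq(0:_Ru)$. Suppose toward a contradiction that $(0:_Ru)\subseteq I$ for some parameter ideal $I$; write $I=JR$ where $J=(y_1,\dots,y_{d-a})S$ for lifts $y_i\in S$ of a system of parameters of $R$, so that $\ell(R/JR)<\infty$, i.e.\ $\sqrt{J+PQ}=\fm$. From $QR\subseteq JR$ we obtain $Q\subseteq J+PQ$, and since $P\subseteq\fm$ this yields $Q\subseteq J+\fm Q$; Nakayama's lemma applied to the finitely generated $S$-module $(Q+J)/J$ then forces $Q\subseteq J$. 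But now $PQ\subseteq Q\subseteq J$, so $J+PQ=J$ and hence $\sqrt{J}=\fm$, which makes $J$ an $\fm$-primary ideal, so $\Ht J=d$. This is impossible: $J$ is generated by $d-a$ elements, so Krull's height theorem gives $\Ht J\le d-a<d$ because $a=\Ht P\ge 1$. Hence $(0:_Ru)\not\subseteq I$.

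When $a=b$ the prime $\overline{Q}$ also lies in $\Assh(R)$; the case $\fp=\overline{Q}$, $u\in Q$ is handled by interchanging the roles of $P$ and $Q$ above, using $uP\subseteq PQ$ to get $PR\subseteq(0:_Ru)$ and then the same height contradiction with $d-b$ in place of $d-a$. Together with the trivial case $u=0$ this exhausts $\Assh(R)$, so Question~1.1 fails over $R$. I expect the only point that really needs care to be the reduction in the first paragraph, namely confirming that no embedded prime of $S/PQ$ attains the maximal dimension $d-a$ and identifying exactly when $\overline{Q}\in\Assh(R)$; everything after that is the short Nakayama-and-Krull computation. It is worth noting in passing that this argument uses only that $P$ and $Q$ are nonzero primes contained in $\fm$, so the hypothesis that they are generated by linear forms is convenient — it makes $R$ explicit — rather than essential.
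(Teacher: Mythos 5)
Your proof is correct, and it takes a genuinely different route from the paper's. The paper argues by cases on whether the minimal monomial generating sets $G(P)$ and $G(Q)$ intersect: in the first case it uses linear independence of the shared variables modulo $\fm^2$ to exhibit a parameter element inside $\fp$, and in the second it uses the dimension-drop formula for a part of a system of parameters (Fact \ref{well}) to force $\Ht(P)=0$. Both branches lean on the linear-form hypothesis. Your argument replaces all of this with a single computation: from $Q\subseteq (0:u)+PQ$-type containments you get $Q\subseteq J+\fm Q$, Nakayama gives $Q\subseteq J$, and then $J$ would be an $\fm$-primary ideal generated by $d-\Ht(P)<d$ elements, contradicting Krull's height theorem. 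This is shorter, avoids the case split entirely, and — as you note — uses only that $P$ and $Q$ are nonzero primes of the regular local ring $S$, so it proves a strictly more general statement (it even covers $P=Q$, i.e.\ $S/P^2$, for \emph{arbitrary} systems of parameters, whereas the paper's Proposition \ref{power} on $S/P^n$ is restricted to homogeneous ones). The one step worth spelling out a little more, since the conclusion must cover every $\fp\in\Assh(R)$, is the identification $\Assh(R)\subseteq\{PR,QR\}$; your justification (any prime containing $PQ$ contains $P$ or $Q$, and strict containment drops coheight in the catenary domain $S$) is the right one and is complete. What the paper's approach buys in exchange is essentially nothing here — it is simply a more hands-on computation tied to the monomial structure — so your version is preferable.
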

For a related result concerning  powers of a prime ideal, see Proposition \ref{power}.
These observations have an application, see e.g. Corollary \ref{ap}.   It may be worth
to note that Eisenbud and Herzog 
predicted that product of ideals of height at least two  in a regular ring  is not Gorenstein.
For an important progress, see  \cite{h}. In \S3  we present a connection 
to this problem, see 
Proposition  \ref{catg2} and its corollary. In fact, we give   situations for which a
product of two ideals is  neither Cohen-Macaulay nor quasi-Gorenstein. For instance, see Corollary \ref{gcm}.

For each $n>0$, set $	R_n:= \frac{k[X, Y, Z, W]_{\fm}}{(XY - ZW, W^n, YW)}$. In \S 4, by mimicking from \cite{st}, we
 show  Question 1.1 is not true over $R_n$ if and only if $n=1$. 
The ring $R_2$ is two-dimensional,
generically Gorenstein, Cohen-Macaulay, almost complete-intersection, of type two and of minimal multiplicity three. Also, in \S 4 we present a  ring of multiplicity two equipped with a prime ideal  $\fp\in\Assh(R)$ and  $u\in\fp$ such that $(0 : u)\subset I$.
In the final section    we talk a little about a  question  by Strooker and St\"{u}ckrad: 
	What is the set-theoretic  union of
	all parameter ideals?


\section{Positive side of Question 1.1}
We start by recalling the following well-known facts:

\begin{fact}\label{well} (See \cite[Thorem 14.1]{Mat}) Let $A$ be   a local ring and let $\underline{a}:=a_1,\ldots,a_t$ be a part of system of parameters. Then $\dim A/\underline{a}A=\dim A-t$.
\end{fact}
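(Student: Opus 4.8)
The plan is to reduce the statement to the dimension theorem for noetherian local rings, which says that $\dim A$ is the least integer $n$ such that some $n$ elements of $\fm$ generate an $\fm$-primary ideal. Write $d:=\dim A$, and recall that ``$\underline{a}$ is a part of a system of parameters'' means precisely that it can be completed to a full system of parameters $a_1,\ldots,a_t,a_{t+1},\ldots,a_d$; in particular $(a_1,\ldots,a_d)A$ is $\fm$-primary. I would prove the asserted equality as two inequalities.

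For $\dim A/\underline{a}A\le d-t$: set $B:=A/\underline{a}A$. The ideal $(a_1,\ldots,a_d)A$ being $\fm$-primary, its image in $B$ is $\fm B$-primary; but that image is generated by the classes of $a_{t+1},\ldots,a_d$, i.e.\ by $d-t$ elements, so the dimension theorem applied to $B$ gives $\dim B\le d-t$. For the reverse inequality, put $s:=\dim A/\underline{a}A$ and choose $b_1,\ldots,b_s\in\fm$ whose images in $A/\underline{a}A$ form a system of parameters there. Then $(a_1,\ldots,a_t,b_1,\ldots,b_s)A$ is $\fm$-primary, since modulo $\underline{a}A$ it equals the $\fm/\underline{a}A$-primary ideal generated by the $b_i$ and $\underline{a}A\subseteq\fm$; as it is generated by $t+s$ elements, the dimension theorem forces $d\le t+s$, i.e.\ $s\ge d-t$. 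Combining, $\dim A/\underline{a}A=d-t$.

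Everything here is formal once the dimension theorem is in hand, so the real content---and the step one would actually have to work for in a self-contained account---is that theorem itself, namely the coincidence of Krull dimension with the minimal number of generators of an $\fm$-primary ideal (proved via Krull's generalized principal ideal theorem, or via the Hilbert--Samuel polynomial). Since Fact \ref{well} is only used downstream as a black box, it is legitimate simply to cite \cite{Mat} for it. If desired, the same idea can instead be organized as an induction on $t$, the base case $t=1$ reading: for $a\in\fm$ that is part of a system of parameters, $\dim A/aA=d-1$.
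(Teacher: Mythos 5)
Your proof is correct, and since the paper offers no argument for this Fact beyond citing Matsumura, there is nothing to diverge from: your two-inequality deduction from the dimension theorem (complete $\underline{a}$ to a full system of parameters for the upper bound, lift a system of parameters of $A/\underline{a}A$ for the lower bound) is exactly the standard argument the cited reference gives. Citing \cite{Mat} as a black box, as both you and the paper do, is entirely appropriate here.
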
 

\begin{fact}(Fouli-Huneke)\label{1}
	Let $R$ be a $1$-dimensional local ring and  $u$ be in $\fp\in\Assh(R)$.  Then $(0 : u)$ is not in an ideal generated by a system of parameters.
\end{fact}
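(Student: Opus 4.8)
The plan is to derive a contradiction from $(0:_R u)\subseteq xR$ by a multiplicity count on the ideal $I=xR$ (which is $\fm$-primary, since $x$ is a parameter and $\dim R=1$). We may assume $u\neq 0$, because otherwise $(0:_R u)=R$ is not contained in the proper ideal $xR$. The first step is to notice that the hypothesis forces an isomorphism of $R$-modules: since $uR\cong R/(0:_R u)$ and $(0:_R u)\subseteq xR$, reducing modulo $x$ gives
\[
uR/xuR\;\cong\;R/\bigl(xR+(0:_R u)\bigr)\;=\;R/xR .
\]
The second step is to evaluate lengths through the formula $\ell(M/xM)=\e(xR;M)+\ell(0:_M x)$, valid for every finitely generated $M$ with $\dim M\le 1$ (see \cite{Mat}). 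Applying it to $M=R$ and to $M=uR$, and comparing via $\ell(uR/xuR)=\ell(R/xR)$ from the first step, we obtain
\[
\e(xR;uR)+\ell(0:_{uR}x)\;=\;\e(xR;R)+\ell(0:_R x).
\]
Because $0:_{uR}x=uR\cap(0:_R x)\subseteq 0:_R x$, we have $\ell(0:_{uR}x)\le\ell(0:_R x)$, and therefore $\e(xR;uR)\ge\e(xR;R)$.

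For the reverse inequality I would use additivity of the multiplicity $\e(xR;-)$ on short exact sequences, with the convention that it vanishes on modules of dimension $<1$. From $0\to(0:_R u)\to R\to uR\to 0$ (the surjection being multiplication by $u$) we get $\e(xR;uR)=\e(xR;R)-\e(xR;(0:_R u))$. Now $\fp\in\Assh(R)\subseteq\Ass(R)$, so choose $w\neq 0$ with $\Ann_R(w)=\fp$; since $u\in\fp$ we have $uw=0$, hence $Rw\subseteq(0:_R u)$ and $Rw\cong R/\fp$, so $\e(xR;(0:_R u))\ge\e(xR;R/\fp)$ by additivity again. Finally $\e(xR;R/\fp)\ge 1$: here $R/\fp$ is a one-dimensional local domain and the image of $x$ in it is a nonzero nonunit, so this multiplicity equals the length of a proper nonzero quotient and is at least $1$. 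Combining, $\e(xR;uR)\le\e(xR;R)-1<\e(xR;R)$, contradicting $\e(xR;uR)\ge\e(xR;R)$; this proves the Fact.

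I expect the technical core to be the two standard multiplicity inputs --- the identity $\ell(M/xM)=\e(xR;M)+\ell(0:_M x)$ for $\dim M\le 1$ and the additivity of $\e(xR;-)$ on exact sequences --- since these are exactly where the hypothesis $\dim R=1$ is exploited, and once they are in place the two estimates on $\e(xR;uR)$ close up immediately. The only other point to watch is the inequality $\e(xR;R/\fp)\ge 1$, which is where one genuinely needs $\fp\in\Assh(R)$ and not merely $\fp\in\Ass(R)$: were $\dim R/\fp=0$, the submodule $R/\fp$ of $(0:_R u)$ would contribute nothing to the multiplicity and the argument would collapse.
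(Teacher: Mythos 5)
Your argument is correct, and it is a genuinely different route from the paper's. The paper argues by contradiction as you do, but its key step is structural rather than numerical: from $(0:u)\subseteq (x)$ it deduces that the multiplication map $R/(x)\stackrel{u}{\lo}R/(ux)$ is injective, then invokes the Fouli--Huneke characterization of parameters (\cite[Theorem 4.1]{fh}) to conclude that $ux$ is a parameter element, which is absurd since $ux\in\fp\in\Assh(R)$. Your proof replaces that external characterization with a self-contained multiplicity count: the isomorphism $uR/xuR\cong R/xR$ combined with Serre's identity $\ell(M/xM)=\e(xR;M)+\ell(0:_Mx)$ forces $\e(xR;uR)\geq\e(xR;R)$, while additivity on $0\to(0:u)\to R\to uR\to 0$ together with the embedded copy of $R/\fp$ inside $(0:u)$ forces $\e(xR;uR)\leq\e(xR;R)-1$. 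What the paper's proof buys is brevity, at the cost of quoting a nontrivial theorem; what yours buys is that it runs entirely on two standard facts about Hilbert--Samuel multiplicity and makes completely explicit where $\fp\in\Assh(R)$ (as opposed to merely $\fp\in\Ass(R)$) enters, namely in $\e(xR;R/\fp)\geq 1$. The only point you state without justification is that the image of $x$ in $R/\fp$ is nonzero; this is immediate (if $x\in\fp$ then $\dim R/xR\geq\dim R/\fp=1$, contradicting that $x$ is a parameter), but it is worth a line since it is the same place the $\Assh$ hypothesis is used.
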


\begin{proof}
Suppose by way of contradiction that  there is a	parameter ideal $(x)$ such that $(0:u)\subset(x)$.
 Let $r$ be such that $ru=aux$ for some $a$.
		Thus, $ r-ax \in(0:u)\subset(x)$. From this, $r\in(x)$. So, the map $R/(x)\stackrel{u}\lo R/(ux)$ is injective.
		In view of \cite[Theorem 4.1]{fh} we see that $ux$ is   parameter. Since $ux\in\fp\in\Assh(R)$, we get to a contradiction.
\end{proof}

\begin{fact}(Fouli-Huneke and others)\label{gor}
	Let $R$ be a Gorenstein local ring,  $u$ be in $\fp\in\Assh(R)$.  Then $(0 : u)$ is not in an ideal generated by a system of parameters.
\end{fact}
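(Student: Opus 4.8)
The plan is to argue by contradiction: assuming $(0:u)\subseteq I$ for a parameter ideal $I=(x_1,\dots,x_d)$, I will produce a system of parameters of the quotient ring $R/uR$ violating the monomial conjecture --- whose validity (Hochster in equicharacteristic, Andr\'{e} in general) is exactly the ingredient the statement alludes to. The trivial cases are quickly dispatched: if $u=0$ then $(0:u)=R$ lies in no proper ideal, and if $d=0$ then $R$ is Artinian Gorenstein, $I=0$, and $(0:u)\subseteq 0$ would make $u$ a nonzerodivisor, contradicting $u\in\fp\in\Ass(R)$; so assume $u\neq 0$ and $d\geq 1$.

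Since $R$ is Cohen--Macaulay, $\underline x$ is a regular sequence, so $(x_1^{t+1},\dots,x_d^{t+1}):_R(x_1\cdots x_d)^t=I$ for every $t$; equivalently the canonical map $R/I\to \varinjlim_t R/(x_1^t,\dots,x_d^t)=\HH^d_{\fm}(R)$ is injective (this is the elementary, Cohen--Macaulay, instance of the monomial conjecture). Since $R$ is Gorenstein, $E:=\HH^d_{\fm}(R)$ is the injective hull of $k$, and in particular an injective $R$-module. Let $e_1\in E$ be the image of $1+I$; then $\Ann_R(e_1)=I$, and in the direct-limit presentation $e_1$ is represented at level $t$ by $(x_1\cdots x_d)^{t-1}+(x_1^t,\dots,x_d^t)$.

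Now I bring in the hypothesis. The surjection $R\xrightarrow{\;\cdot u\;}uR$ identifies $uR\cong R/(0:u)$, and because $(0:u)\subseteq I$ it factors through a surjection $uR\twoheadrightarrow R/I$; composing with $R/I\hookrightarrow E$ gives a nonzero map $\theta\colon uR\to E$ with $\theta(ur)=re_1$ for $r\in R$. As $E$ is injective and $uR\subseteq R$, extend $\theta$ to $\tilde\theta\colon R\to E$ and set $e_0:=\tilde\theta(1)$; comparing $\tilde\theta(ur)=r(ue_0)$ with $\theta(ur)=re_1$ forces $ue_0=e_1$. Representing $e_0$ by some $g+(x_1^M,\dots,x_d^M)$ at a suitable level $M$ and rewriting $ue_0=e_1$ in the direct limit (enlarging $M$ and replacing $g$ by an appropriate multiple of a power of $x_1\cdots x_d$), I obtain $(x_1\cdots x_d)^{M-1}\in (x_1^M,\dots,x_d^M)+uR$. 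The case $M=1$ reads $1\in I+uR\subseteq\fm$, which is impossible, so $M\geq 2$.

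It remains to reduce modulo $uR$. Since $\fp\in\Assh(R)$ we have $\dim R/\fp=d$, and $uR\subseteq\fp$, so $\dim R/uR=d$; moreover $\bar x_1,\dots,\bar x_d$ generate the $\fm$-primary ideal $(I+uR)/uR$, hence form a system of parameters of $R/uR$. Reducing the last containment modulo $uR$ gives $(\bar x_1\cdots\bar x_d)^{t}\in(\bar x_1^{\,t+1},\dots,\bar x_d^{\,t+1})$ in $R/uR$ with $t:=M-1\geq 1$, contradicting the monomial conjecture for this local ring and this system of parameters. This is where the real content sits: the monomial conjecture must be applied to $R/uR$, which is typically neither Cohen--Macaulay nor even equidimensional, so the elementary instance used above does not suffice --- the full, now-proven, monomial conjecture is essential; the only other mildly delicate point is the bookkeeping with the direct-limit model of $\HH^d_{\fm}(R)$ in the third step.
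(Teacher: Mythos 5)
Your proof is correct, and it follows exactly the route the paper's proof gestures at: the paper disposes of this fact in one line by citing Fouli--Huneke's remark that it follows from the monomial conjecture together with Andr\'{e}'s theorem, whereas you actually supply the reduction --- using that $\underline{x}$ is a regular sequence to embed $R/I$ into $E=\HH^d_{\fm}(R)$, extending along $uR\subseteq R$ by injectivity of $E$, and reading off $(x_1\cdots x_d)^{M-1}\in(x_1^M,\dots,x_d^M)+uR$, which violates the (now proven, in full generality) monomial conjecture for the system of parameters $\bar x_1,\dots,\bar x_d$ of $R/uR$. All steps check out, including the well-definedness of $\theta$ from $(0:u)\subseteq I$ and the observation that $\dim R/uR=d$ because $u\in\fp\in\Assh(R)$; your argument is essentially the Gorenstein specialization of the duality argument the paper runs in Proposition \ref{quasi} for quasi-Gorenstein rings.
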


\begin{proof}
Fouli and Huneke remarked that the desired claim follows from the validity of   monomial conjecture.
 Recently,  Andr\'{e} \cite{an} proved this.
\end{proof}

 By $\mu(-)$ we mean the minimal number of elements that needs to generate $(-)$.
 
\begin{observation} \label{em}
	Let $(S,\fn)$ be a regular local ring and $J\lhd S$  be unmixed.  Adopt one of the  following situations:
 i)    $\dim S<4$, or
 ii)  $\mu(J)<3$. 
Then    Question 1.1  is not true over $R:=S/J$.
\end{observation}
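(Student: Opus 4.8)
The plan is to reduce the statement to two results already established in the excerpt: Fact \ref{gor} for Gorenstein rings and Fact \ref{1} for one-dimensional rings. The substance of the argument is to show that under either numerical hypothesis the ring $R=S/J$ is Gorenstein, except when $\dim R\leq 1$, in which case the remaining facts take over.

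First I would dispose of the small-dimensional cases. If $\dim R=1$, then unmixedness of $J$ gives $\Ass(R)=\Assh(R)$ and Fact \ref{1} applies directly. If $\dim R=0$, then $R$ is artinian, the only system of parameters is the empty one, so $I=(0)$; on the other hand $u$ lies in $\fm$, the unique associated prime, so $u$ is either $0$ (and then $(0:u)=R$) or nilpotent (and then $(0:u)\neq(0)$), and in both cases $(0:u)\not\subseteq I$. Hence from now on I may assume $\dim R\geq 2$.

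The key auxiliary point is that an unmixed ideal of height one in the regular local ring $S$ is principal. Indeed $S$ is a unique factorization domain, so in an irredundant primary decomposition $J=Q_1\cap\cdots\cap Q_r$ each radical $\sqrt{Q_i}$ is a height-one prime, hence of the form $(p_i)$; localizing at $(p_i)$ in the discrete valuation ring $S_{(p_i)}$ identifies $Q_i$ with $(p_i^{n_i})$, and pairwise coprimality of the $p_i$ gives $J=(p_1^{n_1}\cdots p_r^{n_r})$. Granting this, I handle the two situations. In situation (i): since $S$ is Cohen--Macaulay and $J$ is unmixed, $\Ht J=\dim S-\dim R\leq 3-2=1$, so $J$ is $(0)$ or principal, whence $R$ is regular or a hypersurface --- in particular Gorenstein. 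In situation (ii): if $\Ht J=\mu(J)$ then $J$ is generated by a regular sequence (as $S$ is Cohen--Macaulay), so $R$ is a complete intersection; while $\Ht J<\mu(J)\leq 2$ cannot occur, since it would force $\Ht J\leq 1$, and then $J=(0)$ (as $S$ is a domain) or $J$ is principal (by the auxiliary point), contradicting $\mu(J)=2$. In both situations $R$ is Gorenstein, and Fact \ref{gor} finishes the proof.

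The only step requiring genuine care is the bookkeeping in the last paragraph: verifying that unmixedness together with the Cohen--Macaulay property of $S$ yields the equality $\Ht J=\dim S-\dim R$, and that in situation (ii) the possibility $\Ht J=1<\mu(J)$ is really excluded by the factoriality of $S$. I do not anticipate any obstacle beyond this; once $R$ is known to be Gorenstein (or of dimension at most one) the statement is an immediate consequence of the cited facts.
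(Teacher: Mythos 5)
Your proposal is correct and follows essentially the same route as the paper: reduce via Fact \ref{1} (and the trivial domain/artinian cases) to showing $R$ is Gorenstein, using the fact that an unmixed height-one ideal in the UFD $S$ is principal for case (i), and that $\Ht J=\mu(J)=\grade J$ forces $J$ to be a complete intersection in case (ii), then invoke Fact \ref{gor}. The only differences are cosmetic: you spell out the $\dim R\le 1$ reductions and the proof of the height-one principality, which the paper merely asserts.
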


\begin{proof}
i) In the light of Fact \ref{1} we  may  assume that $1<\dim R<4$. 
If $\dim R=3$, then $R$ is regular. Since $R$ is a domain, the claim follows.
It remains to assume that $\dim R=2$ and $\mu(\fm)=3$. It follows that $\Ht(J)=1$.
Over $\UFD$, any height-one unmixed ideal is principal. Thus,
$R$ is hypersurface.  It remains to apply  Fact \ref{gor}.

ii)
The case $\mu(J)=1$ is in part i). We may assume that  $\mu(J)=2$.
Since $J$ is unmixed and $S$ is $\UFD$ we deduce that $\Ht(J)=2$. In particular,
$\grade(J,S)=\mu(J)=2$. This implies that $J$ is generated by a regular sequence of length two. Thus,
$R$ is complete-intersection. Now, the desired   claim  follows from   Fact \ref{gor}.
\end{proof}

\begin{corollary}\label{cmm2}Question 1.1  is not true
over a	Cohen-Macaulay local ring  $R$ of multiplicity two.
\end{corollary}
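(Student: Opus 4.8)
The plan is to reduce the statement to the Gorenstein case already treated in Fact \ref{gor}, by showing that a Cohen--Macaulay local ring of multiplicity two is automatically Gorenstein (in fact a complete intersection after completion).

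First I would exploit the classical inequality $\e(R)\geq \mu(\fm)-\dim R+1$, valid for every Cohen--Macaulay local ring. Since $\e(R)=2$, this forces $\mu(\fm)\leq\dim R+1$; combined with the trivial bound $\mu(\fm)\geq\dim R$ we get $\mu(\fm)\in\{\dim R,\dim R+1\}$. The value $\mu(\fm)=\dim R$ would make $R$ regular, hence $\e(R)=1$, a contradiction; so $R$ has embedding codimension one. I would then pass to the completion $\widehat R$, which is again Cohen--Macaulay, of the same dimension, embedding dimension and multiplicity, hence also of embedding codimension one. Taking a minimal presentation $\widehat R=S/J$ over a regular local ring $S$ with $\dim S=\mu(\fm)=\dim R+1$, one gets $\Ht J=\dim S-\dim\widehat R=1$, and $J$ is unmixed because $\widehat R$ is Cohen--Macaulay. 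As recalled in the proof of Observation \ref{em}, over the $\UFD$ $S$ every height-one unmixed ideal is principal, so $J=(g)$ and $\widehat R$ is a hypersurface; (in the mixed-characteristic situation where one is forced to take $\dim S=\dim R+2$, the same reasoning gives $J$ unmixed of height two, hence a regular sequence, so $\widehat R$ is a complete intersection). In either case $\widehat R$ is Gorenstein, and therefore so is $R$. Now Fact \ref{gor} applies and gives that $(0:u)$ lies in no ideal generated by a system of parameters, for any $\fp\in\Assh(R)$ and any $u\in\fp$.

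The only substantive point is this structural reduction — that the Cohen--Macaulay hypothesis together with multiplicity two forces the hypersurface/Gorenstein property — which is where I would concentrate the argument (or simply invoke a standard structure theorem for rings of minimal multiplicity). Once that is in place the conclusion is immediate from the monomial conjecture via Fact \ref{gor}. As an alternative, one could deduce the corollary directly from Observation \ref{em} after checking that the truth of Question 1.1 descends from $\widehat R$ to $R$; routing the argument through the Gorenstein property sidesteps that bookkeeping, since the equivalence ``$R$ is Gorenstein iff $\widehat R$ is Gorenstein'' is standard.
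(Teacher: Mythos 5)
Your argument is essentially the paper's own proof: the paper likewise invokes Abhyankar's inequality to get $\mu(\fm)\leq\dim R+1$, concludes that $\hat R$ is a hypersurface, and then appeals to Fact \ref{gor}. You have merely filled in the (correct) details of why embedding codimension one forces the hypersurface property and why Gorenstein-ness descends from $\hat R$ to $R$, so the proposal is correct and takes the same route.
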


\begin{proof}
Recall from 	Abhyankar's inequality that
$\mu(\fm) - \dim R + 1 \leq \e(R)$. This  implies that
$\mu(\fm) \leq \dim R + 1$. Thus, 
$\hat{R}$ is hypersurface, and so the claim follows.
\end{proof}

By  $\HH^{i}_{\fa}(-)$ we mean the $i$-th $\it{\check{C}heck}$ cohomology module of $(-)$ with
respect to a generating set of $\fa$. 
Also, in the sequel we will use the concept of \it{limit closure}.
Let $\underline{y}:=y_1,\ldots,y_{\ell}\subset \fm$.
Recall that $R/(\underline{y})^{\lim}$ is the image
of $R/(\underline{y})$ under the isomorphism  $\HH^{\ell}_{\underline{y}}
(R)\cong{\varinjlim}_n R/(y^n_1, \ldots,  y^n_{\ell})$. 

\begin{example}
Let $R:=k[[x,y]]$. Then $(x^2,xy)^{\lim}=R$ and $(x,y)^{\lim}=(x,y)$. In particular, limit
closure does not preserve the inclusion.
\end{example}

\begin{proof}Set $J:=(x^2,xy)$ and $u:=x^2 xy=x^3y$. In the light of Hartshorne-Litchenbaum vanishing, we see $\HH^2_{J}(R)=0$. By definition, $J^{\lim}=R$. One may see this more explicitly: $1\in(J^{[3]}:_Ru^{2})= \left((x^{6} , x^3y ^3):_R  x^6y^{2}\right).$ 
 Since $x,y$ is a regular sequence, we have  $(x,y)^{\lim}=(x,y)$.
\end{proof}

However, by restriction over parameter ideals we have:

\begin{observation}
	Let $J_1\subset J_2$ be two ideals generated by a part of system of parameters.
	Then $J_1^{\lim}\subset J_2^{\lim}$.
\end{observation}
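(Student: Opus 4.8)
Throughout I would work from the explicit form of the limit closure. For $J=(z_1,\ldots,z_\ell)$, writing $J^{[n]}=(z_1^n,\ldots,z_\ell^n)$, the isomorphism $\HH^{\ell}_{\underline z}(R)\cong\varinjlim_n R/J^{[n]}$ (its transition maps being multiplication by $z_1\cdots z_\ell$) identifies $J^{\lim}$ with $\bigcup_{n\ge1}\bigl(J^{[n]}:_R(z_1\cdots z_\ell)^{n-1}\bigr)$. Multiplying a witnessing membership by $z_1\cdots z_\ell$ shows this union is increasing, so that on the one hand $r\in J^{\lim}$ gives $(z_1\cdots z_\ell)^{N-1}r\in J^{[N]}$ for \emph{all} $N\gg 0$, and on the other hand a \emph{single} relation $(z_1\cdots z_\ell)^{K}r\in J^{[L]}$ with $K\le L-1$ already forces $r\in J^{\lim}$ (multiply up by $(z_1\cdots z_\ell)^{L-1-K}$ and use that $J^{[L]}$ is an ideal). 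These two remarks carry the bookkeeping.

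The first step is to reduce to the case where $J_1$ and $J_2$ are generated by parts of a system of parameters of the \emph{same} length. Write $J_1=(x_1,\ldots,x_s)$. Since $R/J_2$ is a quotient of $R/J_1$, we have $\dim R/J_2\le\dim R/J_1$, so $J_2=(y_1,\ldots,y_t)$ with $t\ge s$, and the usual prime avoidance argument produces $x_{s+1},\ldots,x_t\in J_2$ with $x_1,\ldots,x_t$ again a part of a system of parameters and $\dim R/(x_1,\ldots,x_t)=\dim R/J_2$. Put $J_1'=(x_1,\ldots,x_t)$, so $J_1\subseteq J_1'\subseteq J_2$. That $J_1^{\lim}\subseteq (J_1')^{\lim}$ is immediate from the formula: given $(x_1\cdots x_s)^{N-1}r\in(x_1^N,\ldots,x_s^N)$, multiply by $(x_{s+1}\cdots x_t)^{N-1}$ to land inside $(x_1^N,\ldots,x_t^N)$. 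So it suffices to prove $(J_1')^{\lim}\subseteq J_2^{\lim}$ when $J_1'\subseteq J_2$ are both generated by parts of a system of parameters of length $t$.

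The equal-length case is the heart of the matter, and here I would use the change-of-parameters determinant. Each $x_i$ lies in $J_2$, so $\underline{x}^{T}=C\,\underline{y}^{T}$ for some $t\times t$ matrix $C$ over $R$; setting $\Delta=\det C$ and multiplying $\operatorname{adj}(C)\,C=\Delta I_t$ through by $\underline{y}$ gives $\Delta y_j\in(x_1,\ldots,x_t)=J_1'$ for every $j$, that is, $\Delta J_2\subseteq J_1'$. Feeding these relations into the witnessing membership for an element $r\in (J_1')^{\lim}$, and trading ordinary powers of $J_1'$ for Frobenius powers via $(J_1')^{t(m-1)+1}\subseteq (J_1')^{[m]}$, one extracts a relation of the shape $(y_1\cdots y_t)^{K}\Delta^{c}r\in(y_1^L,\ldots,y_t^L)$ with $K\le L-1$, whence $\Delta^{c}r\in J_2^{\lim}$ by the second remark of the first paragraph.

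The main obstacle is then to cancel $\Delta^{c}$, i.e.\ to show that $\Delta$ is a nonzerodivisor on $R/J_2^{\lim}$; this step is not formal, and is the point where the geometry of $R$ enters. I would dispose of it by localizing: limit closure commutes with localization, and $J_2^{\lim}=\bigcap_{\fq\in\Ass(R/J_2^{\lim})}\bigl(J_2^{\lim}R_{\fq}\cap R\bigr)$, so it is enough to check $r/1\in(J_2R_{\fq})^{\lim}$ for every associated prime $\fq\supseteq J_2^{\lim}$. If $\fq\ne\fm$ then $R_{\fq}$ has strictly smaller dimension, $J_1R_{\fq}\subseteq J_2R_{\fq}$ are again parts of systems of parameters of $R_{\fq}$ (here one uses that $R$ is equidimensional and catenary), and one concludes by induction on $\dim R$. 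The remaining case $\fq=\fm$---the only one in which $R/J_2^{\lim}$ has depth zero, in particular the case where $J_1$ and $J_2$ are full systems of parameters---must be treated by hand: one arranges, by a generic choice of the extension $x_{s+1},\ldots,x_t$ and of the matrix $C$, that $\Delta$ avoids the finitely many minimal primes of $R$, hence is a nonzerodivisor, and then $\Delta^{c}$ cancels. (When $R$ is Cohen--Macaulay all the limit closures coincide with the ideals themselves and the statement is trivial; for $R$ an unmixed quotient of a regular local ring this bottom case goes through cleanly.) The two genuinely delicate points are exactly these: localization preserving ``part of a system of parameters,'' and the nonzerodivisor choice of $\Delta$.
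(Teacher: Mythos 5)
Your reduction to the equal-length case and the determinant relation $\Delta J_2\subseteq J_1'$ are fine as far as they go, but the argument breaks at exactly the step you flag: cancelling $\Delta^{c}$. To pass from $\Delta^{c}r\in J_2^{\lim}$ to $r\in J_2^{\lim}$ you need $J_2^{\lim}:_R\Delta^{c}=J_2^{\lim}$, i.e.\ that $\Delta$ is a nonzerodivisor on $R/J_2^{\lim}$. In the essential case where $J_2$ is a full system of parameters, $R/J_2^{\lim}$ is a quotient of $R/J_2$ and hence has finite length, while $\Delta$ lies in $\fm$ in every nontrivial instance (if $\Delta$ were a unit then $J_2=\Delta J_2\subseteq J_1'$ and there is nothing to prove; already $J_1=(x^2)\subset J_2=(x)$ gives $\Delta=x$). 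So $\Delta$ annihilates the socle of $R/J_2^{\lim}$ and is a zerodivisor on it whenever $J_2^{\lim}\neq R$. Arranging for $\Delta$ to avoid the minimal primes of $R$ makes it a nonzerodivisor \emph{on $R$}, which is not the property you need and does not imply it. The localization step cannot absorb the difficulty either: for a full system of parameters $\Ass(R/J_2^{\lim})=\{\fm\}$, so the intersection over associated primes is vacuous and the entire burden falls on the very case you propose to ``treat by hand''; the closing assertion that this bottom case ``goes through cleanly'' is not an argument. (The bookkeeping producing the exponent inequality $K\le L-1$ after trading ordinary powers for Frobenius powers is also not actually carried out, but that is a secondary issue.)

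The paper takes a different and much shorter route, and your own parenthetical remark (``when $R$ is Cohen--Macaulay the statement is trivial'') is the germ of it: one does not try to work inside $R$, but passes to a balanced big Cohen--Macaulay $R$-algebra $B$, which exists by Andr\'{e}'s theorem. Since the union $\bigcup_n\bigl((y_1^{n+1},\ldots,y_m^{n+1}):_Ry^{n}\bigr)$ stabilizes, $J^{\lim}$ is a single colon ideal, and the fact that $y_1,\ldots,y_m$ is a regular sequence on $B$ converts it into $J^{\lim}=JB\cap R$ for every ideal $J$ generated by a part of a system of parameters. Monotonicity is then immediate from $J_1B\subseteq J_2B$. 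If you want to repair your argument, this identity $J^{\lim}=JB\cap R$ is the lemma to aim for; the determinant trick by itself cannot supply the cancellation.
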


\begin{proof}
	The claim is trivial if the ring is Cohen-Macaulay, and we are going to reduce  to this case.
	Suppose $J_1=(y_1,\ldots,y_m)$. Set $y:=y_1.\cdots.y_m$. The sequence $\{
	(y_1^{n+1},\ldots,y_m^{n+1}):_Ry^{n}|n\in\mathbb{N}\}$ is increasing and its union is $J_1^{\lim}$.  There is an integer $n$ such
	that   $J_1^{\lim}=(y_1^{n+1},\ldots,y_m^{n+1}):y^{n}$.	By a theorem of Andr\'{e},  there is a big Cohen-Macaulay algebra $A$ over $R$.
	In fact,   $B:=\hat{A}$
	is balanced.   This yields that   $y_1,\ldots,y_m$
	is a regular sequence over $B$. Thus, $J_1^{\lim}=((y_1^{n+1},\ldots,y_m^{n+1}):_By^{n})\cap R=J_1 B\cap R.$ Since $B$ is balanced,	the same  argument implies that $J_2^{\lim}= J_2B\cap R$. 
	Since $J_1\subset J_2$, it follows  that $J_1^{\lim}\subset J_2^{\lim}$.
\end{proof}

This observation suggests:

\begin{definition}For an ideal $J$ of a local ring $R$, we set$$
J^{\mathrm{BCM}}:=\{x \in R~|~x \in JB~\mbox{for some balanced big Cohen-Macaulay}~R\mbox{-algebra}~B\}.
$$\end{definition}
	
Let $E_R(k)$
be the injective envelop of $k$ as an $R$-module. A local ring $R$ is called \it{quasi-Gorenstein} if $\HH^{\dim R}_{\fm}
(R)\cong E_R(k)$. 
 Here, we use a trick
that we learned from \cite{st}:

\begin{proposition}\label{quasi}
	 Let $(R,\fm,k)$ be quasi-Gorenstein,   $\fp\in\Assh(R)$ and let $u\in\fp$. 	Then $(0 : u)\nsubseteq I^{\lim}$. In particular,  $(0 : u)$ is not in any ideal generated by a system of parameters.
\end{proposition}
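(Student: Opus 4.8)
The plan is to exploit the quasi-Gorenstein hypothesis through the local duality / canonical module formalism, translating the containment $(0:u) \subseteq I^{\lim}$ into a statement about $\HH^d_\fm(R)$ that forces $u$ to lie outside $\fp$, a contradiction. The key structural input is that for a quasi-Gorenstein ring, $\HH^d_\fm(R) \cong E_R(k)$, and under Matlis duality the limit closure $I^{\lim}$ corresponds to the submodule of $\HH^d_\fm(R)$ killed by the socle-type data attached to the system of parameters generating $I$. Concretely, if $I = (\ux)$ with $\ux = x_1,\dots,x_d$ a system of parameters, then $R/I^{\lim}$ embeds into $\HH^d_\fm(R)$ as the submodule $(0 :_{\HH^d_\fm(R)} \ux)$ (the image of the canonical map $R/I \to \varinjlim R/(x_1^n,\dots,x_d^n) = \HH^d_\fm(R)$, whose kernel is exactly $I^{\lim}$).

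First I would reduce to understanding the element $u$ inside $E_R(k) \cong \HH^d_\fm(R)$. Using $\fp \in \Assh(R)$, pick $R/\fp \hookrightarrow R$ realizing $\fp = (0:w)$ for a suitable $w$; since $\dim R/\fp = d$, the element $w$ generates a submodule of $R$ on which multiplication by a system of parameters for $R/\fp$ is "almost regular," and one gets a nonzero element of $\HH^d_\fm(R) \cong E_R(k)$ of the appropriate shape. Then I would argue by contradiction: suppose $(0:u) \subseteq I^{\lim}$ for some parameter ideal $I = (\ux)$. Matlis-dualizing, and using $\HH^d_\fm(R) \cong E_R(k)$, the inclusion $(0:u) \subseteq I^{\lim}$ dualizes to a surjection-type statement. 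The crucial point, following the trick from \cite{st}, is that $(0:u) \subseteq I^{\lim} = I^{\mathrm{BCM}} = IB \cap R$ for a balanced big Cohen-Macaulay algebra $B$ (by Observation following the limit-closure discussion, together with André's theorem); so it suffices to derive a contradiction from $(0:u) \subseteq IB \cap R$ where $\ux$ is a $B$-regular sequence.

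The core computation is then: since $\ux$ is $B$-regular and $IB \cap R \supseteq (0:u)$, we can choose $r \in R$ with $ru - a u x_1 \in \cdots$ (mimicking the $1$-dimensional argument in Fact \ref{1}), leading to the conclusion that multiplication by $u$ on $R/I^{\lim}$, viewed inside $\HH^d_\fm(R) \cong E_R(k)$, is injective on the relevant socle. But $E_R(k)$ has a one-dimensional socle, and $u \in \fp$ with $\fp$ a prime in $\Assh(R)$ means $u$ annihilates a nonzero submodule of $\HH^d_\fm(R)$ isomorphic to $E_{R/\fp}(k)$ (by local duality applied to $R/\fp$, a $d$-dimensional quotient); this annihilation contradicts the injectivity just obtained. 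Equivalently, and perhaps more cleanly: the hypothesis forces $ux_1 \cdots$ to behave like a parameter, yet $ux_1\cdots x_d \in \fp$, and no parameter (or power of the product of a system of parameters) lies in a minimal prime of maximal dimension.

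\textbf{Main obstacle.} The delicate step is the precise identification of $R/I^{\lim}$ as a submodule of $\HH^d_\fm(R)$ and the compatibility of this identification with multiplication by $u$ — in particular, verifying that the Matlis dual of the inclusion $(0:u)\subseteq I^{\lim}$ really does yield injectivity of $u$ on a submodule with simple socle. Carrying this through cleanly likely requires the balanced big Cohen-Macaulay algebra $B$ not just abstractly but with the property that $B \otimes_R (R/\fp)$ still detects the nonvanishing of the relevant homology class, i.e., that passing to $B$ does not kill the witness $w$ for $\fp \in \Assh(R)$; this is where the quasi-Gorenstein hypothesis (via $\HH^d_\fm(R) \cong E_R(k)$ and hence a perfect duality) does the real work, since it guarantees $\HH^d_\fm(R)$ is faithfully "large enough" that $(0:u) \subsetneq I^{\lim}$ cannot degenerate.
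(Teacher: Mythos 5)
Your proposal sets up the right duality framework (quasi-Gorensteinness as $\HH^d_\fm(R)\cong E_R(k)$, the identification of $R/I^{\lim}$ with a submodule of $\HH^d_\fm(R)$, Matlis-dualizing the containment $(0:u)\subseteq I^{\lim}$), but the engine of the argument is missing. The step you call the ``core computation'' is literally an ellipsis: choose $r\in R$ with $ru-aux_1\in\cdots$, ``mimicking the $1$-dimensional argument.'' The one-dimensional argument of Fact \ref{1} rests on the criterion of \cite[Theorem 4.1]{fh} for $ux$ to be a parameter element in dimension one; no higher-dimensional analogue is stated, and the subsequent claims --- that multiplication by $u$ is ``injective on the relevant socle,'' and that this contradicts $u$ killing a copy of $E_{R/\fp}(k)$ inside $E_R(k)$ (the submodule of $E_R(k)$ killed by $u$ is $\Hom_R(R/uR,E_R(k))\cong E_{R/uR}(k)$, not $E_{R/\fp}(k)$) --- are not derived from anything. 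You yourself flag exactly this step as the ``main obstacle'' without resolving it, so the proof is not complete.

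What the paper actually uses at this point is the canonical element conjecture (now a theorem, via Andr\'{e}) applied not to $R$ but to $A:=R/uR$: since $u\in\fp\in\Assh(R)$, one has $\dim A=d$, so the image $\ux$ of a system of parameters $\underline{y}$ of $R$ is a system of parameters of $A$, and the canonical map $\mu^A_{\ux}:A/\ux A\to\HH^d_{\ux}(A)$ is nonzero. One then checks $\mu^A_{\ux}=\mu^R_{\underline{y}}\otimes_R A$ and Matlis-dualizes over $R$, using $\HH^d_\fm(R)=E_R(k)$ to identify $(\mu^R_{\underline{y}})^v$ with the composition $R\twoheadrightarrow R/(\underline{y})^{\lim}\hookrightarrow (R/(\underline{y}))^v$; the nonvanishing of $(\mu^A_{\ux})^v=\Hom_R(A,(\mu^R_{\underline{y}})^v)$ then says precisely that $\Hom_R(A,R)=(0:u)$ has nonzero image in $R/(\underline{y})^{\lim}$, i.e.\ $(0:u)\nsubseteq(\underline{y})^{\lim}$. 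This is also the only place where $\fp\in\Assh(R)$ enters (it guarantees $\dim A=d$); your use of $\fp\in\Ass(R)$ to write $\fp=(0:w)$ plays no role, since the statement concerns $(0:u)$ for $u\in\fp$, not $(0:w)=\fp$. Your big Cohen--Macaulay detour ($I^{\lim}=IB\cap R$) invokes the same deep theorem of Andr\'{e}, but in a form from which the needed nonvanishing does not follow without essentially redoing the canonical-element argument for $A$.
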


\begin{proof} 
	Set $A:=R/uR$ and  $d:=\dim R$. Since $u\in\fp\in\Assh(R)$, we deduce that $d=\dim A$.  Let $\fm=\fp_d\supset\ldots\supset\fp_0=\fp$ be a strict chain of prime ideals of $R$. By going down  property of flatness, there is a chain $\fm_{\hat{R}}=\fq_d\supset\ldots\supset\fq_0=:\fq$ of prime ideals of $\hat{R}$ such that $\fq_i$ lying over $\fp_i$.  In particular, there is a  $\fq\in\Assh(\hat{R})$ lying over $\fp$, and so $u\in \fq$. Since $(0 :_{\hat{R}} u)= (0 : u)\hat{R}$ and $I^{\lim}\hat{R}= (I\hat{R})^{\lim}$, without loss of the generality we may assume that    $R$ is complete, and $A$ is as well.
	
Let $\underline{y}:=y_1,\ldots,y_d$ be any system of parameter of $R$.	Let ${\underline{x}}$ be the lift of $\underline{y}$ to $A$. 
	Let $\mu_{\underline{x}}^A:A/{\underline{x}}A\to \HH^d_{{\underline{x}}}(A)$ be the  natural map.
	By the canonical element conjecture, which is now a theorem,  we have $\mu_{\underline{x}}^A\neq 0$.
 Since
	$R$ is quasi-Gorenstein, $\HH^d_{\fm}(R)=E_R(k)$. We set $(-)^v:=\Hom_R(-,E_R(k))$.
Denote the maximal ideal of $A$ by $\fn$.	 Grothendieck's
	vanishing theorem says that $\HH^{>d}_{\fn}(-)=0$. We apply this along with the independence theorem
	of local cohomology modules to observe that 
	$\HH^{d}_{\fn}(A)=\HH^{d}_{\fn}(R)\otimes_RA=\HH^{d}_{\fm}(R)\otimes_RA.$
	It turns out that $\mu_{\underline{x}}^A=\mu_{\underline{y}}^R\otimes_RA$.
	
	By definition, the map $R/(\underline{y})^{\lim} \to \HH^{d}_{\fm}(R)$ is
	injective, and its image is the submodule of $\HH^{d}_{\fm}(R)$ which
	is annihilated by $(\underline{y})^{\lim}$.
	 We have
	$$0\neq(\mu_{\underline{x}}^A)^v=\Hom_R(\mu_{\underline{x}}^A,E_R(k))=\Hom_R(\mu_{\underline{y}}^R\otimes_RA,E_R(k))=\Hom_R(A,(\mu_{\underline{y}}^R)^v)=
	\Hom_R(A, \sigma_y^R ),$$where
	$$\sigma_{\underline{y}}^R:=\left(R/(\underline{y}) \twoheadrightarrow R/(\underline{y})^{\lim}\stackrel{\cong}\lo\Ann_{E_R(k)}(\underline{y})^{\lim}\hookrightarrow E_R(k)\right)^v.$$ The assignment $E'\mapsto\Ann_R E'$ induces a 1-1 correspondence from submodules  $E'$ of $E_R(k)$  to ideals 
	of $R$. We have $E_R(k)^v=R$. The mentioned correspondence is given by $\ker(E_R(k)^v\twoheadrightarrow( E')^v)$ too.
	From these observations, for any ideal $J$ we have $(\Ann_{E_R(k)}J)^v\cong R/J$.
	In particular, 
	we can identify $\sigma_{\underline{y}}^R$, up to an isomorphism,  with the composition of $R\twoheadrightarrow R/(\underline{y})^{\lim}$
	and $ R/(\underline{y})^{\lim}\hookrightarrow (R/(\underline{y}))^v$. It follows that the composition
	$$\Hom_R(A,R)\lo\Hom_R(A,R/(\underline{y})^{\lim})\lo\Hom_R(A,(R/(\underline{y}))^v) $$is nonzero.
	In particular, the map  $\Hom_R(A,R)\lo\Hom_R(A,R/(\underline{y})^{\lim}) $ is nonzero.
	It turns out that $(0:u) \nsubseteq(\underline{y})^{\lim}$.
	To see the particular case, it is enough to note that $(\underline{y})\subset (\underline{y})^{\lim}$.
\end{proof}

\begin{remark}\label{quasin}
	The quasi-Gorenstein assumption is needed:
	Let $R$ be an equidimensional local ring  with zero-divisors equipped with a parameter ideal $I$ such that  $I^{\lim}=\fm$. Such a thing exists, see \cite[Example 6.1]{fh}. Let $\fp\in\Assh(R)$ and  let $u\in\fp$ be nonzero. Clearly, $(0 : u)\subset \fm=I^{\lim}$. In the next section we will show
	that  quasi-Gorenstein assumption is needed even in the particular case.
\end{remark}

\begin{observation}
	Let $R$ be of depth zero and $\fp\in\Assh(R)$. There is a nonzero $x\in\fp$
	such that $(0 : x)\nsubseteq  I$.		
\end{observation}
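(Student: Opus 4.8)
The plan is to produce the element $x$ directly as a nonzero socle element and to exploit that its annihilator is as large as possible, namely all of $\fm$. Since $\depth R=0$, the maximal ideal $\fm$ is associated to $R$, so $\Soc(R)=(0:_R\fm)\neq 0$; fix any $0\neq s\in\Soc(R)$. Because $s\neq 0$ we have $(0:s)\neq R$, and since $\fm s=0$ we have $\fm\subseteq(0:s)$; maximality of $\fm$ then forces $(0:s)=\fm$.

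Next I would check that $s$ already lies in the prescribed prime $\fp$. If $\dim R\geq 1$, then $\dim R/\fp=\dim R\geq 1$, so $\fp\neq\fm$; from $\fm s=0\subseteq\fp$ with $\fp$ prime and $\fm\not\subseteq\fp$ we conclude $s\in\fp$. If $\dim R=0$, then $\fp=\fm$ (the only prime of $R$) and again $s\in\fm=\fp$, apart from the degenerate case of a field, which is excluded since then $\fp=(0)$ has no nonzero element. So in every relevant case we may set $x:=s$, a nonzero element of $\fp$ with $(0:x)=\fm$.

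It remains to see that $\fm$ is not contained in any ideal generated by a system of parameters. Such an ideal $I$ is generated by elements of $\fm$, so $I\subseteq\fm$; if moreover $\fm\subseteq I$, then $I=\fm$, so $\fm$ would be generated by $\dim R$ elements, i.e. $R$ would be regular. When $\dim R\geq 1$ this contradicts $\depth R=0<\dim R$; when $\dim R=0$ it means $R$ is a field, which is excluded (there $I=(0)\neq\fm$). Hence $\fm\not\subseteq I$, that is $(0:x)\not\subseteq I$, for every parameter ideal $I$, which in particular yields the asserted statement.

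The argument is essentially immediate once one recognizes that depth zero is exactly what guarantees a socle element, so the only points needing a moment's care are placing this element inside $\fp$ (handled by primality of $\fp$ together with $\fp\in\Assh(R)$) and ruling out $\fm=I$, i.e. excluding the regular and field degeneracies — which is precisely where the hypothesis $\depth R=0$ enters.
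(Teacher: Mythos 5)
Your proof is correct and follows essentially the same route as the paper's: pick an element with annihilator equal to $\fm$ (guaranteed by $\depth R=0$), place it in $\fp$ using primality together with $\fp\neq\fm$, and rule out $\fm\subseteq I$ because that would force $R$ to be regular. The only difference is that you treat the $\dim R=0$ and field degeneracies explicitly, which the paper dispatches with a ``without loss of generality $\dim R>0$.''
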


\begin{proof}Without loss of generality we  may assume that $\dim R>0$. Thus,
	there is $y\in\fm\setminus\{\fp\}$. Since $\depth(R)=0$, there is an $x$ such that $\fm=(0:x)$. Since $\fp$ is prime and  $xy=0$ we see $x\in\fp$.
	If $(0:x)\subset I$, 
	then we should have $\fm\subset I$. This implies that $R$ is regular, a contradiction.
\end{proof}

An $R$-module $K_R$ is called  canonical   if $K_R\otimes_R\hat{R}\cong\HH^{\dim R}_{\fm}(R)^v$. 
In the case the ring is Cohen-Macaulay, we denote it by $\omega_R$.

\begin{fact}\label{fk}A local
	ring is quasi-Gorenstein if  and only if $K_R$ (exists and)   becomes free and of rank one. 
\end{fact}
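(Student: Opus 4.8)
The plan is to run everything through Matlis duality, writing $(-)^v:=\Hom_R(-,E_R(k))$ as in the proof of Proposition \ref{quasi}, and to invoke three standard facts: $\Hom_R(E_R(k),E_R(k))\cong\hat R$ (hence, dualizing, $\hat R^v\cong E_R(k)$); the top local cohomology $\HH^d_{\fm}(R)$, where $d:=\dim R$, is artinian by Grothendieck's theorem, hence Matlis reflexive, so $\HH^d_{\fm}(R)^{vv}\cong\HH^d_{\fm}(R)$; and $R\otimes_R\hat R\cong\hat R$. I also record that a canonical module, when it exists, is unique up to isomorphism: since it is finitely generated and $K_R\otimes_R\hat R$ is pinned down to $\HH^d_{\fm}(R)^v$, this is faithfully flat descent along $R\to\hat R$. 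This makes the phrase ``becomes free and of rank one'' unambiguous.

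For the forward direction, suppose $R$ is quasi-Gorenstein, i.e. $\HH^d_{\fm}(R)\cong E_R(k)$. Dualizing gives $\HH^d_{\fm}(R)^v\cong E_R(k)^v\cong\hat R\cong R\otimes_R\hat R$; thus $K_R:=R$ satisfies the defining isomorphism of a canonical module and is visibly free of rank one.

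For the converse, assume $R$ has a canonical module $K_R$ with $K_R\cong R$. Then by definition $\HH^d_{\fm}(R)^v\cong K_R\otimes_R\hat R\cong\hat R$. Applying $(-)^v$ once more, and using Matlis reflexivity of the artinian module $\HH^d_{\fm}(R)$ together with $\hat R^v\cong E_R(k)$, we obtain $\HH^d_{\fm}(R)\cong\HH^d_{\fm}(R)^{vv}\cong\hat R^v\cong E_R(k)$, so $R$ is quasi-Gorenstein.

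I do not anticipate a genuine obstacle here; the only care needed is bookkeeping with the module structures, namely checking that the displayed isomorphisms respect the $\hat R$-structures so that Matlis duality over $R$ and over $\hat R$ may be used interchangeably — legitimate because $E_R(k)=E_{\hat R}(k)$ — together with the uniqueness remark above, which is what makes the biconditional well posed.
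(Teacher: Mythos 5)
The paper records this Fact without proof, as a known consequence of Matlis duality and Aoyama's results on canonical modules, so there is no argument of the paper to measure yours against; supplying one is reasonable, and your strategy is the natural one. The forward direction is correct as written: $E_R(k)$ is $\fm$-torsion, so $\Hom_R(E_R(k),E_R(k))=\Hom_{\hat R}(E_R(k),E_R(k))\cong\hat R\cong R\otimes_R\hat R$, and $K_R:=R$ satisfies the defining isomorphism and is free of rank one.

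The converse, however, rests on a step that is false as stated when $R$ is not complete. With $(-)^v:=\Hom_R(-,E_R(k))$ taken over $R$, one has $\hat R^v=\Hom_R(\hat R,E_R(k))\cong E_R(k)\oplus\Hom_R(\hat R/R,E_R(k))$ (apply the exact functor $\Hom_R(-,E_R(k))$ to $0\to R\to\hat R\to\hat R/R\to 0$ and note the restriction map is split by the $\hat R$-action on $E_R(k)$), and the second summand is nonzero whenever $R\neq\hat R$ because $E_R(k)$ is an injective cogenerator. For the same reason artinian modules are not Matlis reflexive over a non-complete $R$: taking $M=E_R(k)$ gives $M^{vv}=\Hom_R(\hat R,E_R(k))\supsetneq M$. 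Your bookkeeping remark --- that $E_R(k)=E_{\hat R}(k)$ lets you pass between $\Hom_R$ and $\Hom_{\hat R}$ --- is valid only when the module being dualized is $\fm$-torsion, as $\HH^{d}_{\fm}(R)$ is (here $d:=\dim R$); it does not apply to the second dual, whose argument is the non-torsion module $\hat R$. The repair is immediate and should be made explicit: since $\HH^{d}_{\fm}(R)$ is $\fm$-torsion and artinian, $\HH^{d}_{\fm}(R)^v=\Hom_{\hat R}(\HH^{d}_{\fm}(R),E_{\hat R}(k))\cong K_R\otimes_R\hat R\cong\hat R$ is a finitely generated $\hat R$-module, and genuine Matlis duality over the complete ring $\hat R$ then gives $\HH^{d}_{\fm}(R)\cong\Hom_{\hat R}(\hat R,E_{\hat R}(k))=E_R(k)$. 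With the second dual taken over $\hat R$ rather than over $R$, your argument is complete. (A smaller quibble: the uniqueness of $K_R$ is not literally faithfully flat descent of an isomorphism; it is the standard lemma that finitely generated modules over a local ring which become isomorphic after completion are already isomorphic.)
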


\begin{proposition}\label{pm} 
	Let $P$ be a nonzero prime ideal  of a local ring $(A,\fn)$. 
	Then  Question 1.1    is not true over $R:=\frac{A}{P\fn}$. 
	Also,   $R$ is  quasi-Gorenstein if and only if $\fn$ is principal.
\end{proposition}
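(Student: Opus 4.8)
The plan is to build everything on the ideal $\overline{P}:=PR=P/P\fn$ of $R$. It has two decisive features: first $\overline{P}\neq0$, since $\overline{P}=0$ would say $P=P\fn$, forcing $P=0$ by Nakayama; second $\fm_R\overline{P}=0$, because $\fm_R\overline{P}$ is the image in $R$ of $\fn P=P\fn$. So $\overline{P}$ is a nonzero ideal killed by $\fm_R$ (hence of finite length as an $R$-module), and $R/\overline{P}\cong A/P$. I would first record that $\Assh(R)=\{\overline{P}\}$ and $\dim R=\dim A/P$: when $P\subsetneq\fn$ one has $\sqrt{P\fn}=P$, so $\overline{P}$ is the unique minimal prime of $R$, $\dim R/\overline{P}=\dim A/P=\dim R$, and every embedded prime has strictly smaller coheight; when $P=\fn$, $R=A/\fn^{2}$ is Artinian and $\overline{P}=\fm_R$ is its only (associated) prime.

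For the first assertion, let $u$ be any element of $\fp:=\overline{P}$. The case $u=0$ is trivial. If $u\neq0$, then $\fm_R u\subseteq\overline{P}\fm_R=0$ together with properness of $(0:u)$ give $(0:u)=\fm_R$. If some parameter ideal $I$ contained it, then $\fm_R\subseteq I\subseteq\fm_R$, so $I=\fm_R$; since a system of parameters has $\dim R$ elements this forces $\mu(\fm_R)\le\dim R$, i.e.\ $R$ is regular, hence a domain. That is impossible, because $u\neq0$ while $(0:u)=\fm_R\neq0$ (if $\fm_R=0$ then already $\fp=0$). Thus $(0:u)\nsubseteq I$ for every $u\in\fp\in\Assh(R)$ and every parameter ideal $I$, which is the failure of Question 1.1 over $R$.

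For the equivalence, the direction ``$\fn$ principal $\Rightarrow R$ quasi-Gorenstein'' first uses that a principal $\fn=(t)$ forces $P=\fn$: if $P\subsetneq(t)$ then $t\notin P$, and iterating $x=ty_{1}$ with $y_{1}\in P$, etc., puts $P\subseteq\bigcap_{n}(t^{n})=0$ by Krull's intersection theorem — impossible. Hence $R=A/\fn^{2}$, a local ring whose maximal ideal $\fm_R=(\bar t)$ is principal with $\fm_R^{2}=0$; a short check gives $(0:_R\bar t)=\fm_R$, so $\Soc(R)=(0:_R\fm_R)=\fm_R\cong R/\fm_R=k$ is one-dimensional, i.e.\ $R$ is Artinian Gorenstein, in particular quasi-Gorenstein. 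Conversely, suppose $R$ is quasi-Gorenstein. If $P\subsetneq\fn$, set $d:=\dim R=\dim A/P\ge1$; applying local cohomology to $0\to\overline{P}\to R\to A/P\to0$ and using $\HH^{i}_{\fm_R}(\overline{P})=0$ for $i>0$ (finite length, Grothendieck vanishing) yields $\HH^{d}_{\fm_R}(R)\cong\HH^{d}_{\fm_R}(A/P)$, which is an $R/\overline{P}$-module and so is annihilated by $\overline{P}\neq0$. But $R$ quasi-Gorenstein means $\HH^{d}_{\fm_R}(R)\cong E_R(k)$, and $E_R(k)$ is not annihilated by $\overline{P}$ (it is faithful after completion, and $\overline{P}\hat R\neq0$ by faithful flatness) — a contradiction. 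Hence $P=\fn$, $R=A/\fn^{2}$ is Artinian quasi-Gorenstein, so $\dim_k\Soc(R)=1$; since $\fm_R^{2}=0$ we have $\fn/\fn^{2}=\fm_R\subseteq\Soc(R)$, whence $\mu(\fn)=\dim_k\fn/\fn^{2}\le1$ and $\fn$ is principal.

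The individual computations are routine; the points that need care are the isomorphism $\HH^{d}_{\fm_R}(R)\cong\HH^{d}_{\fm_R}(A/P)$ together with the non-faithfulness argument — which should be run over $R$ and $\hat R$, not over a completion of $A$, since $P\hat A$ need not be prime — and not overlooking the degenerate cases $P=\fn$ (so $\dim R=0$) and $\fn=0$.
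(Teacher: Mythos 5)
Your argument is correct. The first assertion and the case $P=\fn$ of the second are handled exactly as in the paper: $(0:u)=\fm$ for $0\neq u\in PR$, containment in a parameter ideal would force regularity, and when $P=\fn$ the socle of $A/\fn^2$ is $\fm$, so quasi-Gorensteinness is equivalent to $\mu(\fn)=1$. Where you genuinely diverge is in the two remaining steps. (a) To get $P=\fn$ from principality of $\fn=(t)$ you iterate $x=ty_1$, $y_1=ty_2,\dots$ inside the prime $P$ and invoke Krull's intersection theorem; the paper instead splits on $\dim R\in\{0,1\}$ and quotes the DVR criterion, and your route is cleaner (the paper's $d=1$ case is in fact vacuous, since it ends in $P=\fn$ and hence $d=0$). (b) For the converse when $P\subsetneq\fn$, the paper completes and uses Aoyama's results on canonical modules: $\Ass(K_{\hat R})=\Assh(\hat R)$ is a proper subset of $\Ass(\hat R)$, so $K_{\hat R}$ is not free and Fact \ref{fk} applies. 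You instead note that $\overline{P}=P/P\fn$ has finite length, so the long exact sequence gives $\HH^{d}_{\fm}(R)\cong\HH^{d}_{\fm}(A/P)$, a module killed by the nonzero ideal $\overline{P}$, whereas $E_R(k)$ is faithful (its annihilator vanishes by Matlis duality and faithful flatness of completion). Your version works straight from the definition of quasi-Gorenstein and avoids the canonical-module machinery entirely; the paper's version has the advantage of reusing the criterion of Fact \ref{fk}, which it needs elsewhere. Both are complete; the only details worth keeping explicit are the ones you already flag, namely $\overline{P}\neq0$ via Nakayama and $\dim A/P\geq 1$ when $P\subsetneq\fn$, which make the vanishing of $\HH^{d}(\overline{P})$ and $\HH^{d+1}(\overline{P})$ legitimate.
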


\begin{proof}
	  Let $\fp:=PR$ (resp. $\fm:=\fn R$). 
	We have $\Assh(R)=\{\fp\}$. Let $u\in\fp$. We may assume that $u\neq 0$. Since $\fp \fm=0$ and $u\neq0$, it follows that
	$\fm=(0:u)$. If $(0:u)$ were be a subset of an ideal  $I$, generated by a parameter sequence,
	then we should have $\fm\subset I$. It turns out that $R$ is regular.
	But, $R$ is not even a reduced ring. This contradiction yields a negative answer to Question 1.1.

Suppose $\fn$ is principal.  Then $d:=\dim R\leq 1$. First, assume that $d=0$. Since $P\neq 0$, we have $P=\fn$.  
If $d=1$, since $\fn$ is principal, it follows that $R$ is a discrete valuation
 domain (see \cite[Theorem 11.7]{Mat}). Again, since $P\neq 0$ we deduce that $P=\fn$.
In each cases, $P=\fn$.
Thus,
socle of $R$ is $\fm$. Since $\mu(\fm)=1$, 
$R$ is quasi-Gorenstein. Now assume that  $\fn$ is  not principal. 
We have two possibilities: i) $P=\fn$, or ii) $P\neq \fn$. In the first case,
$\Soc(R)=\fm$. Since $\mu(\fm)=\mu(\fn) >1$,  $R$ is not quasi-Gorenstein.  Then, 
without loss of the generality we may and do assume that $P\neq \fn$.
 As ${\hat{R}}$ is complete, and in view of   \cite[(1.6)]{a0}, $K_{\hat{R}}$ exists.
 Recall that $\hat{R}\cong \frac{\hat{A}}{\hat{P}\hat{\fn}}$.
 We apply this along with $\Ass(K_{\hat{R}})=\Assh(\hat{R})$ (see \cite[(1.7)]{a0})
 to deduce that
$$\Ass(K_{\hat{R}})=\Assh(\hat{R})\subset\min(\hat{R})= \min{(\fp\hat{R})} \neq \{\min{(\fp\hat{R})},\fm\hat{R}\}\subset\Ass(\hat{R}).$$ 
Thus,  $K_{\hat{R}}$ is not  free. By Fact \ref{fk},   $\hat{R}$ is not quasi-Gorenstein. Recall  that  a local ring is 
quasi-Gorenstein if and only if  its completion is as well. So, $R$ is not quasi-Gorenstein.
\end{proof}

\begin{proposition}\label{pro}
	Let $P$ and $Q$ be two prime ideals of $S:= k[[X_1,\ldots,X_d]]$ generated by linear forms. 
	Let $R:=\frac{S}{PQ}$. Then Question 1.1    is not true over $R$.
\end{proposition}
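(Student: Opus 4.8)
The plan is to reduce everything to one very soft fact. As $S$ is a regular local domain (hence equidimensional and catenary), $\dim R=d-\Ht(PQ)$ and $\Ht(PQ)=\min\{\Ht P,\Ht Q\}$; moreover every prime containing $PQ$ contains $P$ or $Q$, so the minimal primes of $PQ$ lie in $\{P,Q\}$ and therefore $\Assh(R)\subseteq\{PR,QR\}$. Using the symmetry $P\leftrightarrow Q$, I may assume $\fp=PR$, which forces $\Ht P=\Ht(PQ)\le\Ht Q$ and $\dim R=d-\Ht P$. If $P=0$ then $PQ=0$, $R=S$ is a domain and there is nothing to prove; so I may further assume $\Ht P\ge 1$.

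The key point is that $QR\subseteq(0:_R u)$ for \emph{every} $u\in\fp=PR$: writing $u=\bar p$ with $p\in P$, for each $q\in Q$ one has $qp\in QP=PQ$, whence $\bar q\,u=0$. Thus it suffices to show that $QR$ is contained in no ideal generated by a system of parameters of $R$. Suppose, for a contradiction, that $QR\subseteq I$ with $I=(\underline g)R$, where $\underline g:=g_1,\dots,g_{\dim R}$ is a sequence in $\fn$ and $\ell(R/I)<\infty$. Pulled back to $S$ this reads $Q\subseteq PQ+(\underline g)$.

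Now pass to the local ring $\bar S:=S/(\underline g)$ with maximal ideal $\bar\fn$. Applying $S\twoheadrightarrow\bar S$ to the inclusion $Q\subseteq PQ+(\underline g)$ gives $\bar Q\subseteq\bar P\,\bar Q$, and since $\bar P\,\bar Q\subseteq\bar Q$ trivially, we obtain $\bar Q=\bar P\,\bar Q$. Because $\bar P\subseteq\bar\fn$ and $\bar Q$ is finitely generated, Nakayama's lemma forces $\bar Q=0$, i.e.\ $Q\subseteq(\underline g)$ already in $S$. Hence $PQ+(\underline g)=(\underline g)$, so $\infty>\ell(R/I)=\ell\bigl(S/(PQ+(\underline g))\bigr)=\ell\bigl(S/(\underline g)\bigr)$ shows that $(\underline g)$ is $\fn$-primary and thus $\Ht(\underline g)=d$. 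But $(\underline g)$ is generated by $\dim R=d-\Ht P\le d-1$ elements, so its height is at most $d-1$ — a contradiction. Therefore $(0:_R u)\nsubseteq I$.

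I expect no genuine obstacle here: the whole argument turns on the passage $QR\subseteq I\Rightarrow Q\subseteq(\underline g)$ supplied by Nakayama, after which the ``underdetermined $\fn$-primary ideal'' contradiction is automatic. (The same mechanism, with $\fp^{m-1}R$ playing the role of $QR$, is what I would use to handle $S/\fp^{m}$ in Proposition \ref{power}.)
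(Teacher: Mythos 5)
Your proof is correct, and it takes a genuinely different route from the paper's. Both arguments start from the same key containment $QR\subseteq(0:_Ru)$ for $u\in PR$, so that $(0:u)\subseteq I$ would force $QR\subseteq I$; from that point on they diverge. The paper leans heavily on the linear-form hypothesis: it splits into cases according to whether the minimal generating sets of $P$ and $Q$ share a variable, argues that the linear forms generating $Q$ remain linearly independent modulo $I\fm$ and hence form part of a minimal generating set of $I$ (so part of a system of parameters), and then reaches a contradiction either because a parameter element would lie in a minimal prime or via the dimension count of Fact \ref{well}. You instead pull $QR\subseteq I$ back to $S$ as $Q\subseteq PQ+(\underline g)$, read this in $S/(\underline g)$ as $\bar Q=\bar P\bar Q$, kill $\bar Q$ by Nakayama, and finish with Krull's height theorem: an $\fn$-primary ideal of $S$ cannot be generated by $\dim R=d-\Ht(P)\le d-1$ elements. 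Your argument never uses that $P$ and $Q$ are generated by linear forms, so it in fact proves the statement for arbitrary nonzero prime ideals $P$ and $Q$ of $S$ (indeed of any Noetherian local domain), and, as you observe, the same mechanism removes the ``homogeneous system of parameters'' restriction from Proposition \ref{power}. The only bookkeeping worth making explicit is that the degenerate cases are harmless: if $Q=0$ then $\Assh(R)=\{(0)\}$ and your symmetry reduction lands you in the domain case, and if $u=0$ in $R$ then $(0:u)=R\nsubseteq I$ trivially; neither affects the argument.
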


\begin{proof}
	We may assume that neither $P$ nor $Q$ is zero.
Let $\fp$ (resp. $\fq$) be the image of  $P$ (resp. $Q$) in $R$. Then $\Assh(R)\subset\{\fp,\fq\}$.
Let $G(P)$ (resp. $G(Q)$) be the minimal monomial generating set of  $P$ (resp. $Q$).
Let $I$ be a parameter ideal.
Suppose first that $G(P)\cap G(Q)\neq \emptyset$. After rearrangement, we may assume that  $X_1\in G(P)\cap G(Q)$.
Also, without loss of generality, we set $G(Q):=\{X_1,\ldots,X_i\} $  and  $G(P):=\{X_1,\ldots\} $.
 By symmetry,  we may and do assume that $\fp\in\Assh(R).$  Let $u\in\fp$ and suppose
on the contradiction that $(0:u)\subset I$.
Since $\fp\fq=0$ we have  $\fq\subset (0:u)\subset I$. Recall that $\{x_1,\ldots,x_i\}\subset \fm$
  modulo $\fm^2$ is $k$-linearly independent. Since $I\fm\subset\fm^2$, we deduce that $\{x_1,\ldots,x_i\}\subset I$ 
  modulo $I\fm$ is $k$-linearly independent.  In particular, $\{x_1,\ldots,x_i\}$ is part of  a minimal
   generating set of $I$. From this, $x_1$ is a parameter element. Thus, 
   $x_1\notin\bigcup_{\mathfrak r\in\Assh(R)}\mathfrak r\subset \fp\cup \fq.$ This is a contradiction. 
Then, without loss of generality we may assume that   $G(P)\cap G(Q)=\emptyset$.
After rearrangement,
we can assume that $G(P):=\{X_1,\ldots,X_m\} $  and  $G(Q):=\{X_{m+1},\ldots,X_{m+n} \} $.
We take $\ell:=d-(m+n)$.
   Also, by symmetry,  we may  assume that $m\leq n$.  We have two possibilities: i) $m< n$,  or  ii)  $m= n$.

i) Since  $m< n$, we have  $\Assh(R)=\{\fp\}$. Let $u\in\fp$ and suppose
on the contradiction that $(0:u)\subset I$. 
We have  $\fq\subset (0:u)\subset I$. It turns out that $\{x_{m+1},\ldots,x_{m+n}\}$ is part of 
 a minimal generating set of $I$, and so  part of a system of parameters.  In view of
  Fact \ref{well} we see: $$\ell+m=\dim S/Q=\dim R/\fq=\dim R/ (x_{m+1},\ldots,x_{m+n})=\dim R-n=(\ell+n)-n=\ell.$$
  This implies $m=0$, and consequently $P=0$. This is a contradiction.

ii) The condition  $m= n$  implies that  $\Assh(R)=\{\fp,\fq\}$. The same  argument as i) yields the desired claim.
\end{proof}

The behavior of quasi-Gorenstein property under certain flat ring extensions
is subject of \cite{a0}. 

\begin{lemma}\label{lp}
	Let $A$ be a complete  
local ring. Then $A$ is  quasi-Gorenstein  if and only if $A[[X]]$ is  quasi-Gorenstein.
\end{lemma}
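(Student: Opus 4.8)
The plan is to reduce the question about $A[[X]]$ to the known behavior of the canonical module under the flat local map $A\to A[[X]]$, using the characterization in Fact \ref{fk} that a (complete) local ring is quasi-Gorenstein exactly when its canonical module exists and is free of rank one. Since $A$ is complete, so is $A[[X]]$, and both rings admit canonical modules, so Fact \ref{fk} applies on both sides. The key input is the standard base-change formula for canonical modules along a flat local homomorphism with Gorenstein (indeed regular, even zero-dimensional after killing $X$) closed fiber: one expects $K_{A[[X]]}\cong K_A[[X]]$, or more precisely $K_{A[[X]]}\cong K_A\otimes_A A[[X]]$ after a degree shift coming from the one-dimensional fiber. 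This is exactly the sort of statement studied in \cite{a0}, which the excerpt cites as treating ``the behavior of the quasi-Gorenstein property under certain flat ring extensions.''

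The steps, in order, would be: (1) Record that $A[[X]]$ is a complete local ring with maximal ideal $\fm_A[[X]]+(X)$, and that $A\to A[[X]]$ is faithfully flat local with closed fiber $A[[X]]/\fm_AA[[X]]\cong k[[X]]$, a DVR, hence Gorenstein of dimension one; in particular $\dim A[[X]]=\dim A+1$. (2) Invoke (or reprove via local cohomology and the independence/flat base change theorems) that $\HH^{\dim A[[X]]}_{\fm_{A[[X]]}}(A[[X]])^{\vee}\cong K_A\otimes_A A[[X]]$, so that $K_{A[[X]]}\cong K_A\widehat{\otimes}_A A[[X]]$ — this is where I would cite \cite{a0} rather than grind it out. (3) Now argue the equivalence: if $A$ is quasi-Gorenstein then $K_A\cong A$ by Fact \ref{fk}, hence $K_{A[[X]]}\cong A[[X]]$ is free of rank one, so $A[[X]]$ is quasi-Gorenstein. (4) Conversely, if $A[[X]]$ is quasi-Gorenstein then $K_A\otimes_A A[[X]]$ is free of rank one over $A[[X]]$; by faithful flatness of $A\to A[[X]]$ and the fact that freeness of rank one descends along faithfully flat maps (a finitely generated module whose base change is free of rank one is itself free of rank one, e.g. by comparing minimal generators via Nakayama and checking the first syzygy), we conclude $K_A$ is free of rank one, so $A$ is quasi-Gorenstein by Fact \ref{fk}.

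The main obstacle is step (2): justifying the base-change isomorphism $K_{A[[X]]}\cong K_A\otimes_A A[[X]]$ cleanly, since $A[[X]]$ is not module-finite over $A$ and one must be slightly careful with completions (the ``$\widehat{\otimes}$'' versus $\otimes$) and with the definition of $K_R$ used here, which is via $\HH^{\dim R}_{\fm}(R)^{\vee}$ rather than the Cohen–Macaulay $\omega_R$. I would handle this either by quoting the relevant statement of \cite{a0} directly, or, if a self-contained argument is wanted, by computing $\HH^{d+1}_{\fm_{A[[X]]}}(A[[X]])$ via the composite functor / spectral sequence for the ideal $\fm_A A[[X]]+(X)$, using that local cohomology of $A[[X]]$ with respect to $\fm_A A[[X]]$ agrees with $\HH^{\bullet}_{\fm_A}(A)\otimes_A A[[X]]$ (flat base change) and that the extra $(X)$ contributes one more cohomological degree via the DVR fiber; then applying Matlis duality over the complete ring $A[[X]]$. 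Descent of rank-one freeness in step (4) is routine and I would not belabor it.
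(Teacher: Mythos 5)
Your proposal follows essentially the same route as the paper: both hinge on the base-change isomorphism $K_{A[[X]]}\cong K_A\otimes_A A[[X]]$ together with Fact \ref{fk}. The paper settles your step (2) exactly as you suggest, by citing Aoyama \cite[(1.6)]{a0} after presenting $A$ and $A[[X]]$ as quotients of Gorenstein local rings $G$ and $G[[X]]$ via Cohen's structure theorem, and it handles the converse direction by applying $-\otimes_{A[[X]]}A[[X]]/(X)$ to recover $K_A\cong A$ rather than by your (equally valid) faithfully flat descent of rank-one freeness.
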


\begin{proof}By Cohen's structure theorem, $A$ is quotient of a Gorenstein local ring $G$.
	 Also, $A[[X]]$  is quotient of a Gorenstein local ring $G[[X]]$. In particular, 
$K_A$ and $K_{A[[X]]}$ exist, see \cite[(1.6)]{a0}.
Set $r:=\dim G-\dim A=\dim( G[[X]])-\dim( A[[X]])$. In view
of \cite[(1 .6 )]{a0}  we have $$K_{A[[X]]}\cong\Ext_{G[[X]]}^r(A[[X]],G[[X]]) \cong \Ext_G^r(A,G)\otimes_AA[[X]]\cong K_A\otimes_AA[[X]]\quad(\ast)$$ 
Suppose  $A[[X]]$ is      quasi-Gorenstein.  By  applying $-\otimes_{A[[X]]}\frac{A[[X]]}{(X)}$ 
along with $A[[X]]\cong K_{A[[X]]}\stackrel{(\ast)}\cong K_A\otimes_AA[[X]]$
we deduce that  $A \cong K_A.$  By Fact \ref{fk}   $A$ is      quasi-Gorenstein. 
 The converse part follows  by $(\ast)$.
\end{proof}

 The following result inspired from \cite{h}.

\begin{corollary}\label{ap}
	Let $P$ and $Q$ be two nonzero prime ideals of $S:= k[[X_1,\ldots,X_n]]$ generated by
	 linear forms such that $G(P)\cap G(Q)= \emptyset$ and let $R:=\frac{S}{PQ}$.  The following are equivalent:
		\begin{enumerate}
			\item[i)]   $R$  is hypersurface,
			\item[ii)] $R$ is complete-intersection,
			 	\item[iii)] $R$ is Gorenstein,
			 		\item[iv)] $R$ is  quasi-Gorenstein,
			 		\item[v)] $R$ is  Cohen-Macaulay.
		\end{enumerate}  
\end{corollary}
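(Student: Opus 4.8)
The plan is to prove the chain of implications $\mathrm{i)}\Rightarrow\mathrm{ii)}\Rightarrow\mathrm{iii)}\Rightarrow\mathrm{iv)}$ (each of which is standard ring theory: a hypersurface is a complete-intersection, a complete-intersection is Gorenstein, a Gorenstein ring is quasi-Gorenstein) and $\mathrm{iii)}\Rightarrow\mathrm{v)}$ (Gorenstein implies Cohen--Macaulay). So the real content is to close the loop by showing that both $\mathrm{iv)}\Rightarrow\mathrm{i)}$ and $\mathrm{v)}\Rightarrow\mathrm{i)}$. In both cases I would first reduce to analyzing $R=S/PQ$ structurally. Since $G(P)\cap G(Q)=\emptyset$, after rearranging the variables we may write $G(P)=\{X_1,\ldots,X_m\}$ and $G(Q)=\{X_{m+1},\ldots,X_{m+n}\}$, with $\ell:=n-(m+n)$ free variables left over (notation as in the proof of Proposition \ref{pro}). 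Then $PQ=(X_iX_j: 1\le i\le m,\ m+1\le j\le m+n)$, and $R$ has two minimal primes $\fp=(X_1,\ldots,X_m)R$ and $\fq=(X_{m+1},\ldots,X_{m+n})R$, with $\dim R/\fp=n+\ell$ and $\dim R/\fq=m+\ell$, so $\dim R=\max(m,n)+\ell$. The ring is a hypersurface precisely when $\mu(PQ)=1$, i.e. when $mn=1$, i.e. $m=n=1$.

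For $\mathrm{v)}\Rightarrow\mathrm{i)}$: if $R$ is Cohen--Macaulay it is equidimensional, which forces $\dim R/\fp=\dim R/\fq$, hence $m=n$. Now if $m=n\ge 2$, I would exhibit that $R$ is not Cohen--Macaulay — for instance by checking that $\depth R<\dim R$. The cleanest route is to kill the $\ell$ free variables and the "common" behaviour to reduce to the Artinian-over-a-regular-sequence situation, or more directly to observe that the two components $V(\fp)$ and $V(\fq)$ meet along $V(\fp+\fq)$, which has dimension $\ell$, while $\dim R=m+\ell$; when $m=n\ge 2$ this intersection has codimension $\ge 2$ in each component, and gluing two Cohen--Macaulay pieces along a subvariety of codimension $\ge 2$ destroys the Cohen--Macaulay property (a Mayer--Vietoris / depth argument on $0\to R\to R/\fp\oplus R/\fq\to R/(\fp+\fq)\to 0$). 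So $m=n=1$, i.e. $R$ is a hypersurface.

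For $\mathrm{iv)}\Rightarrow\mathrm{i)}$: this is exactly the situation handled by Corollary \ref{cmm2} and the surrounding discussion only after we know $\mathrm{v)}$, so instead I would argue directly with the canonical module, as in Proposition \ref{pm}. Passing to the completion (which is harmless since quasi-Gorenstein-ness ascends and descends along completion, as recalled in the proof of Proposition \ref{pm}), $\hat R\cong \widehat S/\widehat{PQ}$ is a quotient of a Gorenstein ring, so $K_{\hat R}$ exists and $\Ass(K_{\hat R})=\Assh(\hat R)$. If $m<n$ then $\Assh(\hat R)=\{\fp\hat R\}$ is a single prime while $\Ass(\hat R)\supseteq\{\fp\hat R,\fq\hat R\}$ has two elements, so $K_{\hat R}$ cannot be free of rank one, hence by Fact \ref{fk} $R$ is not quasi-Gorenstein; so we must have $m=n$. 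When $m=n\ge 2$, the ring is equidimensional with $\Ass(\hat R)=\{\fp\hat R,\fq\hat R\}=\Assh(\hat R)$, so the associated-primes obstruction disappears and one must instead compare ranks: a quasi-Gorenstein ring is in particular generically Gorenstein with $K_R$ of rank one, but here $K_R$ localized at the generic point of either component has rank one on that component, so rank one globally is consistent — the obstruction is rather that $K_R$ fails to be free. I would show non-freeness by a minimal-generators count, e.g. $\mu(K_R)=\type(R)$ (for $R$ not Cohen--Macaulay one works with the completion and the fact that $K_{\hat R}$ has $\mu(K_{\hat R})\ge 2$ when $m=n\ge 2$, which can be extracted from an explicit presentation of $\Ext^r$ or from the fact that $R$ is "two linear spaces glued in high codimension", the same geometric picture as above). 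Combining, $m=n=1$ and $R$ is a hypersurface.

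The main obstacle I anticipate is the case $m=n\ge 2$ in the implication $\mathrm{iv)}\Rightarrow\mathrm{i)}$: unlike the Cohen--Macaulay case, equidimensionality is automatic and the cheap "too many associated primes" trick from Proposition \ref{pm} no longer applies, so one genuinely has to show the canonical module is non-free, which requires either an explicit $\Ext$-computation over the ambient power series ring or a careful depth/Serre-condition argument showing $R$ is not even $(S_2)$ (and a quasi-Gorenstein ring satisfies $(S_2)$). I expect the $(S_2)$ route to be the most economical: two linear subspaces of $\Bbb{A}^n$ meeting in codimension $\ge 2$ inside each component give a ring that violates $(S_2)$ at the generic point of the intersection, and $(S_2)$-failure rules out quasi-Gorenstein by Fact \ref{fk} together with the standard fact that canonical modules satisfy $(S_2)$.
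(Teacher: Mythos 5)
Your proposal is correct in outline, but for the key implication $iv)\Rightarrow i)$ it takes a genuinely different route from the paper. The paper's proof, after using equidimensionality to reduce to $G(P),G(Q)$ of the same size $\ell$ and Lemma \ref{lp} to strip off the free variables, constructs the explicit system of parameters $a_i:=x_i+x_{\ell+i}$, shows $I^{\lim}=\fm$ when $\ell\geq 2$, and then invokes Proposition \ref{quasi} (a quasi-Gorenstein ring never has $(0:u)\subset I^{\lim}$) to get a contradiction; this keeps the argument inside the paper's ``limit closure'' machinery and simultaneously produces the example promised in Remark \ref{quasin}. You instead dispose of the unequal-height case exactly as in Proposition \ref{pm} (via $\Ass(K_{\hat R})=\Assh(\hat R)$), and for the equidimensional case $m=n\geq 2$ you propose showing $R$ fails Serre's condition $(S_2)$ at the generic point of $\fp+\fq$, where $R$ localizes to a ring of dimension $m\geq 2$ and depth $1$ (by the depth lemma applied to $0\to R\to R/\fp\oplus R/\fq\to R/(\fp+\fq)\to 0$), while a quasi-Gorenstein ring must satisfy $(S_2)$ because canonical modules do (Aoyama, a fact the paper itself uses in the remark after Proposition \ref{power}). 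That argument is complete and valid, and is essentially a fourth proof alongside the paper's three (limit closure; decomposability of $\HH^d_{\fm}(R)$ in Observation \ref{inter}; Hochster--Huneke's Fact \ref{ho}); it even avoids the reduction to $2\ell$ variables, since localization at $\fp+\fq$ kills the free variables automatically. Do commit to the $(S_2)$ route, though: your intermediate suggestions (counting $\mu(K_R)$, or an explicit $\Ext$ presentation) are left unexecuted and the ``rank one is consistent'' digression is a red herring. Your $v)\Rightarrow i)$ is the same geometric content as the paper's, with the Mayer--Vietoris depth count replacing the disconnected-punctured-spectrum argument via \cite[Corollary 3.9]{41}; both give $\depth R=\ell_{\mathrm{free}}+1<\dim R$ when $m=n\geq 2$. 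One small point to fix: you use $n$ both for the number of ambient variables and for $|G(Q)|$, which makes your formula $\ell:=n-(m+n)$ come out negative as written.
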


\begin{proof} 
	First, we prove that the  first four items are equivalent. Among them, the only nontrivial implication 
	is $iv) \Rightarrow i)$: We assume that $R$ is quasi-Gorenstein.  Suppose on the way of contradiction
	 that  one of $Q$ and $P$  is not principal.   By symmetry, we may and do assume that $P$ is 
	 not principal.  Since $G(P)\cap G(Q)= \emptyset$, both of $PR$ and $QR$ are minimal prime ideals of $R$.
	   Recall that quasi-Gorenstein rings are equidimensional.   It turns out that $\Ht(P)=\Ht(Q)$.
Let $G(P)$ (resp. $G(Q)$) be the minimal monomial generating set of  $P$ (resp. $Q$).	
Without loss of generality, we set $G(Q):=\{X_1,\ldots,X_{\ell}\} $  and  $G(P):=\{X_{\ell+1},\ldots,X_{2\ell}\} $.
Since   $P$ is not principal,  $\ell\geq2$. The extension
$\frac{k[[X_1,\ldots,X_{2\ell}]]}{(X_1,\ldots,X_{\ell})(X_{\ell+1},\ldots,X_{2\ell})}\lo R$
is either the identity map or is the power series extension. Then,  in view of Lemma \ref{lp}, 
 we may and do assume that $2\ell=n$.
For each $i\leq \ell$, we set $a_i:=x_i+x_{\ell+i}$ and we denote the ideal generated by them with $I$.
Since $x_{i}a_i=x_{i}^2$ and $x_{\ell+i}a_i=x_{\ell+i}^2$ we deduce that $\{a_i\}$ is a system of parameters. 
We set $\zeta:=\prod_{i=1}^{\ell} a_i$. Then $\fm\zeta\subset(a_i^2)_{i=1}^{\ell}$ (here, we need $\ell\geq 2$). 
We apply this along with
$I^{\lim}=\bigcup_{n\geq 1}((a_1^{n},\ldots,a_{\ell}^n):\zeta^{n-1})$ to deduce that  $I^{\lim}=\fm$. 
In the light of  Proposition \ref{quasi}  we see that $R$ is not quasi-Gorenstein. 
This is a contradiction that we searched for it.

Here, we show $iv) \Rightarrow v)$: It is enough to use $iv) \Leftrightarrow i)$.

Finally, we show $v) \Rightarrow i)$: As $R$ is  equidimensional
and by using the above argument, we deduce that
$R_0:=\frac{k[[X_1,\ldots,X_{2\ell}]]}{(X_1,\ldots,X_{\ell})(X_{\ell+1},\ldots,X_{2\ell})}\to R$
is either the identity map or is the power series extension.  It follows that
$R_0$ is Cohen-Macaulay. We claim that depth of  $R_0$ is one.  The element $x_1+x_{\ell+1}$ is regular,
 because $\zd(R_0)=\bigcup_{\fp\in\Ass(R_0)}\fp=(x_1,\ldots,x_{\ell})\cup(x_{\ell+1},\ldots,x_{2\ell})$. We need to show $\depth(R_0)\leq 1$.
In view of \cite[Corollary 3.9]{41}, a  way  to see this, is that its punctured spectrum is disconnected.  
The closed subsets $\V(x_1,\ldots,x_{\ell})\setminus\{\fm\}$ and  $\V(x_{\ell+1},\ldots,x_{2\ell})\setminus\{\fm\}$
 are disjoint, non-empty   and their union is $\Spec^{\circ}(R_0)$. This says that $\Spec^{\circ}(R_0)$ is 
 disconnected.  So, $\depth(R_0)=1$. Since $R_0$   is Cohen-Macaulay, $\ell=\dim R_0=\depth(R_0)=1$. 
 From this, $R$ is hypersurface.
\end{proof}

\begin{proposition}\label{power}
	Let $P$  be a prime ideal of $S:= k[X_1,\ldots,X_d]$ generated by linear forms and $R:= S/P^n $
	 for some $n>0$. Let   $u\in \fq\in\Assh(R)$.   Then $(0 : u)$  is not in an ideal $I$ generated
	  by a homogeneous system of parameters.
\end{proposition}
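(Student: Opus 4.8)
The plan is to normalize $R$ by a graded change of variables, observe that $R=S/P^{n}$ is a very explicit Cohen--Macaulay ring, and then finish with a one-line Hilbert--series comparison. First I would use that a graded change of variables puts $P$ in the form $P=(X_{1},\dots,X_{c})$ with $c=\Ht(P)$. If $P=0$ or $n=1$ then $R$ is a polynomial ring, hence a domain, so $\Assh(R)=\{(0)\}$, the only admissible $u$ is $0$, and $(0:0)=R$ is not contained in the proper ideal $I$; so I may assume $c\ge 1$ and $n\ge 2$ (and dispose of $u=0$ the same way in general). Writing $S=k[X_{1},\dots,X_{c}]\otimes_{k}T$ with $T:=k[X_{c+1},\dots,X_{d}]$, I would record that for every $m\ge 1$ there is an isomorphism of graded $k$-algebras $S/P^{\,m}\cong A_{m}\otimes_{k}T$ with $A_{m}:=k[X_{1},\dots,X_{c}]/(X_{1},\dots,X_{c})^{m}$ graded Artinian. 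Hence $S/P^{\,m}$ is a free $T$-module, so it is a standard graded Cohen--Macaulay ring of dimension $e:=d-c$ whose Hilbert series is $\mathcal H_{S/P^{m}}(t)=\mathcal H_{A_{m}}(t)\,(1-t)^{-e}$, the numerator $\mathcal H_{A_{m}}(t)$ being a polynomial. Finally I would note that $R=S/P^{\,n}$ has $\fp:=PR$ as its unique associated (and minimal) prime, with $\dim R/\fp=\dim T=e=\dim R$, so $\Assh(R)=\{\fp\}$; in particular the given $\fq$ equals $\fp$.

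The key point is then nearly free. Suppose, for a contradiction, that $(0:u)\subseteq I$ for some ideal $I=(x_{1},\dots,x_{e})$ generated by a homogeneous system of parameters, $\deg x_{i}=a_{i}\ge 1$; we may take $u\ne 0$. Since $u\in\fp=PR$ and $\fp^{\,n}=0$ in $R$, we have $\fp^{\,n-1}u\subseteq\fp^{\,n}=0$, hence $\fp^{\,n-1}\subseteq(0:u)\subseteq I$. As $\fp^{\,n-1}$ is the image of $P^{\,n-1}$ in $R=S/P^{\,n}$ and $R/\fp^{\,n-1}\cong S/P^{\,n-1}=:R'$, this means $R/I$ is already a quotient of $R'$: writing $I'=(\bar x_{1},\dots,\bar x_{e})$ for the ideal of $R'$ generated by the images of the $x_{i}$ (still homogeneous of degrees $a_{i}$), the inclusion $\fp^{\,n-1}\subseteq I$ gives $R'/I'\cong R/(I+\fp^{\,n-1})=R/I$. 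Note that I never compute $(0:u)$ itself; only the containment $\fp^{\,n-1}\subseteq(0:u)$ is used.

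To finish I would compare Hilbert series. Since $R/I\cong R'/I'$ is Artinian and $\dim R'=e=\dim R$, the $\bar x_{i}$ are a homogeneous system of parameters of $R'$; as $R$ and $R'$ are Cohen--Macaulay of dimension $e$, the sequence $x_{1},\dots,x_{e}$ on $R$ and $\bar x_{1},\dots,\bar x_{e}$ on $R'$ are regular, so quotienting by them multiplies the Hilbert series by $\prod_{i}(1-t^{a_{i}})$. Combined with the formula from the first step this gives
\[
\mathcal H_{A_{n}}(t)\,{\textstyle\prod_{i}(1+t+\cdots+t^{a_{i}-1})}\;=\;\mathcal H_{R/I}(t)\;=\;\mathcal H_{A_{n-1}}(t)\,{\textstyle\prod_{i}(1+t+\cdots+t^{a_{i}-1})}
\]
in $\mathbb Z[t]$, and cancelling the nonzero polynomial $\prod_{i}(1+t+\cdots+t^{a_{i}-1})$ forces $\mathcal H_{A_{n}}(t)=\mathcal H_{A_{n-1}}(t)$, hence $\dim_{k}A_{n}=\dim_{k}A_{n-1}$. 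This is absurd, since $A_{n-1}$ is the quotient of $A_{n}$ by its nonzero degree-$(n-1)$ component. The contradiction proves $(0:u)\not\subseteq I$.

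The points I expect to take real care with are the bookkeeping in the first step --- that $S/P^{m}\cong A_{m}\otimes_{k}T$ really yields Cohen--Macaulayness and the stated Hilbert series, and that $\Assh(R)=\{PR\}$ --- and the legitimacy of the Hilbert-series cancellation (both ends of the display are genuine polynomials because $R/I$ is Artinian, and $\mathbb Z[t]$ is a domain). The conceptual shortcut that keeps the argument short is to discard $(0:u)$ in favour of the smaller explicit graded ideal $\fp^{\,n-1}$ and pass to $S/P^{\,n-1}$. If one preferred to avoid that, one could instead identify $(0:u)=P^{\,n-\ord_{P}(u)}/P^{\,n}$ directly, using that $\gr_{P}(S)\cong(S/P)[Z_{1},\dots,Z_{c}]$ is a domain; but the weaker inclusion already suffices.
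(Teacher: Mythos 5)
Your proof is correct, and it takes a genuinely different route from the paper's. The paper singles out one linear form $X_1\in P$, uses only the consequence $x_1^{n-1}\in(0:u)\subseteq I$, takes the minimal $i$ with $x_1^i\in I$, and via a lift to $S$ plus a UFD/degree argument rules out $x_1^i\in\fm I$; then $x_1^i$ would be a minimal generator of $I$, hence a parameter element, contradicting $x_1^i\in\fp\in\Assh(R)$. You instead exploit the full containment $\fp^{n-1}\subseteq(0:u)\subseteq I$ to identify $R/I$ with a quotient of $S/P^{n-1}$ by a homogeneous system of parameters, and then compare Hilbert series using the decomposition $S/P^{m}\cong A_m\otimes_k T$ (which gives Cohen--Macaulayness, hence that the h.s.o.p.\ is a regular sequence on both $S/P^{n}$ and $S/P^{n-1}$). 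Your version buys a cleaner and more robust argument: it sidesteps the paper's ``degree-consideration'' step, which is the least transparent point of the published proof, and it makes the role of the hypothesis that $P$ is generated by linear forms completely explicit (linear change of variables, tensor decomposition, $\Assh(R)=\{PR\}$). The paper's version is more elementary in its toolkit (no Hilbert series or Cohen--Macaulay input) and its combinatorial core uses only a single linear form of $P$, though it still silently relies on $\Assh(R)=\{PR\}$. Both arguments use the homogeneity of the system of parameters in an essential way -- yours through graded isomorphisms and Hilbert series, the paper's through the degree bookkeeping. All the delicate points you flagged (gradedness of $R/I\cong R'/I'$, both sides of the displayed identity being polynomials, cancellation in the domain $\mathbb{Z}[t]$, and $\dim_k A_n>\dim_k A_{n-1}$ for $c\geq 1$, $n\geq 2$) check out.
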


\begin{proof}
	Without loss of generality, we assume that  $P\neq 0$ and $n>1$, because the claim is clear over integral domains. After rearrangement,
	$X_1\in P$.
	Let $\fp:=PR$. Then $\Assh(R)=\{\fp\}$, i.e., $\fq=\fp$.
	Suppose
	on the contradiction that $(0:u)\subset I$.
	Since $\fp^n=0$ we have  $x_1^{n-1}\subset (0:u)\subset I$. 
	Let $i$ be the smallest integer such that $x_1^{i}\in I$.  Then $i\leq n-1$.
	First, we deal with the case $x_1^{i}\in \fm I$.
	There are $a_j\in \fm$ and $b_j\in I$ such that $x_1^i=\sum_j 	a_jb_j$. By  looking at this equation in $S$ we get 
	$X_1^i-\sum _jA_jB_j\in P^n.$ Since $S$ is $\UFD$ and by a degree-consideration, there is an $\ell>0$
	such that   $X_1^{\ell}=A_j$ and $B_j=X_1^{i-\ell}$ for some $j$.
	Since $b_j\in I$, we see that  $x_1^{i-\ell}\in I$. This is impossible,
	because of the minimality of $i$. This implies that $x_1^{i}\notin \fm I$.  In particular,	$x_1^i$ is  
	 a  parameter element, because it is part of a minimal generating set of $I$. So,  $x_1^i\notin\bigcup_{\fq\in\Assh(R)}\fq=\fp,$  a contradiction.
	The proof is now complete.
\end{proof}

\begin{remark}Adopt one of the following situations:
	\begin{enumerate}
		\item[i)]  	Let  $S:= k[[X_1,\ldots,X_d]]$ and $\fp$  be a prime ideal generated by
		linear forms.
		\item[ii)]  Let $S$ be a $4$-dimensional unramified complete regular local ring and  $\fp$ be any prime ideal. 
	\end{enumerate}  
	Let $R:= S/\fp^n$ for some $n>1$. Then $R$ is  quasi-Gorenstein if and only if $\fp$ is principal.
\end{remark}

\begin{proof}The if part is clear.
	Now, suppose $\fp$ is not principal.

i)
 After rearrangement, there is an $0<\ell<d$ such that $\fp=(X_i)_{i=\ell}^d$. 
		Set $A:=\frac{k[[X_{\ell},\ldots,X_d]]}{(X_i|  \ell\leq i\leq d)^n}$. Its socle is not principal.
		 Thus, $A$ is not Gorenstein.  Recall that
		$  R=A[[X_1,\ldots,X_{\ell-1}]]$. In view of Lemma \ref{lp}
		we deduce that $R$ is not quasi-Gorenstein.
	
	ii)  Suppose on the way of contradiction that $R$ is quasi-Gorenstein, i.e., $R=K_R$.
	  Recall that $K_R$ satisfies Serre's condition $S(2)$.
		Since $\fp$ is not principal, $\Ht(\fp)\geq 2$, i.e., $\dim R\leq 2$. From these, 
		$R$ is Cohen-Macaulay. It follows that $R$ is Gorenstein.
		By \cite{h} this is impossible.	
\end{proof}

In the same vein we have:

\begin{example} Let $S:=k[X_1,\ldots,X_m]$ be a polynomial ring, $\fn$ be its irrelevant ideal, and
	let $R:=\frac{S}{P\fn^2}$ for some homogeneous prime ideal 
	$P$ of $S$ containing a linear form.  Let   $\fq\in\Assh(R)$ and take $u$ be  in $\fq$. 
	  Then $(0 : u)$  is not in an ideal $I$ generated by a homogeneous system of parameters.
\end{example}


\section{ Mores on non   quasi-Gorenstein  rings}

The following yields
	another proof of Corollary \ref{ap}, because  $PQ=P\cap Q$.
	
	\begin{observation}\label{inter}
		Let $(A,\fn)$ be   Cohen-Macaulay. Let $I$ and $J$ be two unmixed ideals of $A$ of same height and  
		 $\Ht(I+J)\geq\Ht (I)+2$. Then $R:=\frac{A}{I\cap J}$ is not quasi-Gorenstein.	In fact $\HH^{\dim R}_{\fm}(R)$ decomposable.
	\end{observation}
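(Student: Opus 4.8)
The plan is to compute the top local cohomology of $R$ via a Mayer--Vietoris sequence, show it splits as a nontrivial direct sum, and then conclude from the indecomposability of $E_R(k)$.

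First I would fix the numerology. Write $h:=\Ht(I)=\Ht(J)$ and $d:=\dim R$. Since $A$ is Cohen--Macaulay it is equidimensional and catenary, so $\Ht(\mathfrak a)+\dim(A/\mathfrak a)=\dim A$ for every ideal $\mathfrak a$; in particular $\dim A/I=\dim A/J=\dim A-h$. Because $I$ and $J$ are unmixed and $\Ass(A/(I\cap J))\subseteq\Ass(A/I)\cup\Ass(A/J)$, the ring $R=A/(I\cap J)$ is unmixed of dimension $\dim A-h$, so $d=\dim A-h$. The hypothesis $\Ht(I+J)\ge h+2$ then yields
$$\dim A/(I+J)=\dim A-\Ht(I+J)\le \dim A-h-2=d-2.$$

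Next I would apply $\HH^{\bullet}_{\fn}(-)$ (with $\fn$ the maximal ideal of $A$) to the standard short exact sequence
$$0\lo A/(I\cap J)\lo A/I\oplus A/J\lo A/(I+J)\lo 0.$$
In the associated long exact sequence, Grothendieck's vanishing theorem kills both $\HH^{d-1}_{\fn}(A/(I+J))$ and $\HH^{d}_{\fn}(A/(I+J))$, since $\dim A/(I+J)\le d-2$. Hence the connecting homomorphisms give an isomorphism
$$\HH^{\dim R}_{\fm}(R)=\HH^{d}_{\fn}(A/(I\cap J))\;\cong\;\HH^{d}_{\fn}(A/I)\oplus \HH^{d}_{\fn}(A/J),$$
where the middle identification uses the independence theorem for local cohomology. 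By Grothendieck's non-vanishing theorem both summands are nonzero, because $\dim A/I=\dim A/J=d$. Thus $\HH^{\dim R}_{\fm}(R)$ is decomposable, which is the second assertion; in particular it is not isomorphic to $E_R(k)$, since the injective envelope of a simple module is indecomposable, so $R$ is not quasi-Gorenstein.

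I do not anticipate a serious obstacle: the proof is essentially a bookkeeping exercise with a Mayer--Vietoris sequence. The one place where the hypotheses are genuinely used, and hence the step to handle with care, is the bound $\dim A/(I+J)\le d-2$ — this is exactly where Cohen--Macaulayness (to convert heights into dimensions) and the height gap enter — together with the verification that $A/I$ and $A/J$ really have dimension $d$, so that their $d$-th local cohomologies survive.
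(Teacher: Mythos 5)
Your argument is correct and is essentially the paper's proof: the same Mayer--Vietoris sequence $0\to R\to A/I\oplus A/J\to A/(I+J)\to 0$, the same dimension bookkeeping giving $\dim A/(I+J)\le d-2$, and Grothendieck's vanishing and non-vanishing theorems to produce the nontrivial decomposition of $\HH^{d}_{\fm}(R)$. The only (harmless) difference is the last step: you conclude directly from the indecomposability of $E_R(k)$, whereas the paper passes to $\hat{R}$ and applies Matlis duality to see that $K_{\hat{R}}$ decomposes and hence is not free of rank one.
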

	
	\begin{proof}
	 Since $A$ is Cohen-Macaulay, $I$ and $J$ are unmixed and of same height we deduce
	  that $d:=\dim R=\dim A/I=\dim A/J$. Similarly, $\dim(A/I+J)\leq d-2$, because
	  $\Ht(I+J)\geq\Ht (I)+2$. We use Grothendiek's vanishing theorem along with a 
	  long exact sequence  of local cohomology modules induced by $0\to R \to A/I\oplus A/J\to A/(I+J)\to 0$ 
	    to find a   decomposition of $\HH^d_{\fm}(R)\cong\HH^d_{\fm}(A/I)\oplus \HH^d_{\fm}(A/J)$. 
	     By Grothendieck's non-vanishing theorem, the decomposition is nontrivial. Due to flat base change theorem,
	      we know completion behaves well with local cohomology modules. We apply Matlis' functor over $\hat{R}$
	       to see that $K_{\hat{R}}$ equipped with a nontrivial decomposition. In particular,
	        $K_{\hat{R}}$ is not of rank one. Thus, $\hat{R}$ is not quasi-Gorenstein. So,  $R$ is not quasi-Gorenstein. 
	\end{proof}

We left to the reader to deduce the third proof of Corollary \ref{ap}  from the following result that its proof is more technical than Observation \ref{inter}:

\begin{fact}(Hochster-Huneke)\label{ho}
Let $R$ be $d$-dimensional local, complete and equidimensional. Then $\HH^d_{\fm}(R)$ is indecomposable if
for any $\fp,\fq\in\min(R)$ there are minimal prime ideals $\fp_0:=\fp,\ldots,\fp_n:=\fq$ such that $\Ht(\fp_i+\fp_{i+1})\leq 1$ for all $i$.	
	\end{fact}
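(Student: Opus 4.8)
The plan is to transfer the statement to the canonical module via Matlis duality, to observe that a direct sum decomposition of the canonical module forces a partition of the set of minimal primes, and to show that the hypothesis makes that partition trivial. (One may of course instead simply invoke the Hochster--Huneke indecomposability theorem; the following is the line of argument behind it.) Since $R$ is complete local it is a homomorphic image of a regular, hence Gorenstein, local ring, so a canonical module $K_R$ exists, and $(-)^{\vee}:=\Hom_R(-,E_R(k))$ is a faithful exact contravariant functor with $\HH^d_\fm(R)^{\vee}\cong K_R$ and $K_R^{\vee}\cong\HH^d_\fm(R)$. Thus $\HH^d_\fm(R)$ is indecomposable if and only if $K_R$ is, and it suffices to exclude $K_R=L_1\oplus L_2$ with $L_1,L_2\neq 0$. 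Throughout one uses two standard properties of $K_R$ (see \cite{a0}): it satisfies Serre's condition $S_2$, and, since $R$ is equidimensional, $\Ass_R K_R=\Assh(R)=\min(R)$, because an embedded prime has dimension $<d$ and so cannot be associated to $K_R$.

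Assume $K_R=L_1\oplus L_2$ nontrivially. For $\fp\in\min(R)$ the module $(K_R)_\fp$ is a one-dimensional $R_\fp$-vector space, so exactly one of $(L_1)_\fp$, $(L_2)_\fp$ is nonzero; equivalently $\min(R)=S_1\sqcup S_2$, where $S_i:=\Ass L_i=\Min\Supp L_i$ is nonempty and $\Supp L_i=\bigcup_{\fp\in S_i}\V(\fp)$. The key claim is that this partition is closed under the relation $\Ht(\fp+\fq)\le 1$. If $\Ht(\fp+\fq)=0$ then $\fp=\fq$; otherwise pick a prime $\fb\supseteq\fp+\fq$ with $\Ht\fb=1$ and localize at $\fb$. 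Then $A:=R_\fb$ is a one-dimensional local ring, and it is again equidimensional, since each minimal prime $\mathfrak p_0\subseteq\fb$ of $R$ satisfies $\dim A/\mathfrak p_0 A=\Ht(\fb/\mathfrak p_0)=1$ (as $\mathfrak p_0\subsetneq\fb$ and $\Ht\fb=1$); moreover, because $R$ is equidimensional and catenary, $(K_R)_\fb$ is a canonical module of $A$, and $(K_R)_\fb=(L_1)_\fb\oplus(L_2)_\fb$ with $\fp,\fq$ among the minimal primes of $A$. So the claim reduces to: the canonical module of a complete equidimensional one-dimensional local ring is indecomposable. Granting this, one of $(L_1)_\fb$, $(L_2)_\fb$ vanishes, say $(L_2)_\fb=0$, whence $\fp,\fq\notin\Supp L_2$, i.e. $\fp,\fq\in S_1$.

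For the one-dimensional assertion: if $A$ is Cohen--Macaulay it satisfies $S_2$, hence $\Hom_A(K_A,K_A)\cong A$, which is local and therefore has no idempotent other than $0$ and $1$; a nontrivial decomposition of $K_A$ would produce one. In general, let $H:=\HH^0_\fm(A)$, a finite-length submodule of $A$, and $\bar A:=A/H$; from $0\to H\to A\to\bar A\to 0$ and $\HH^{>0}_\fm(H)=0$ we get $\HH^1_\fm(A)\cong\HH^1_\fm(\bar A)$, hence $K_A\cong K_{\bar A}$, and $\bar A$ is a one-dimensional equidimensional local ring of positive depth, so Cohen--Macaulay, reducing to the previous case. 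Feeding this back, every edge $\Ht(\fp+\fq)\le 1$ joins primes of a common $S_i$; the hypothesis then joins any two minimal primes by a chain of such edges, so all of $\min(R)$ falls in a single $S_i$, contradicting $S_1,S_2\neq\emptyset$. Hence $K_R$, and with it $\HH^d_\fm(R)$, is indecomposable.

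The conceptual content is short once the three properties of canonical modules above are in hand, so I do not expect the idempotent arguments to be the difficulty; the part that must be handled with care is the compatibility of $K$ with the reductions. Passing from $R$ to $\widehat R$ is harmless, but one has to verify that $K$ localizes to the canonical module of $R_\fb$ (this is where equidimensionality together with catenarity of the complete ring $R$ enters) and that killing the finite-length torsion of a one-dimensional $A$ does not disturb $K$. This bookkeeping is precisely what makes the proof more technical than Observation \ref{inter}; it is all subsumed in the Hochster--Huneke theorem, which may be quoted directly, and is closely related to the circle of ideas in \cite{h}.
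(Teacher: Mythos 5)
The paper does not prove this statement at all: it is quoted as a known theorem of Hochster and Huneke (the reference \cite{hh} in the bibliography), and the author explicitly declines to reproduce the argument, remarking only that it is ``more technical than Observation \ref{inter}.'' So there is no in-paper proof to compare against; what you have written is a reconstruction of the Hochster--Huneke argument itself, and as such it is essentially correct and follows the standard line: dualize to $K_R$, note $\Ass K_R=\Assh(R)=\min(R)$, show a direct-sum decomposition induces a partition of $\min(R)$ by supports, and use localization at a height-one prime plus indecomposability of the canonical module in dimension one to see that the partition respects the edges $\Ht(\fp+\fq)\le 1$, so connectedness of the resulting graph forces the decomposition to be trivial.

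Two small points of hygiene. First, for $\fp\in\min(R)$ the module $(K_R)_\fp$ is not in general a one-dimensional $R_\fp$-vector space: $R_\fp$ is an artinian local ring that need not be a field, and $(K_R)_\fp\cong E_{R_\fp}(k(\fp))$, the Matlis dual of $R_\fp$. What you actually need, and what is true, is that this module is indecomposable (it is an injective hull, with simple socle), which still forces exactly one of $(L_1)_\fp,(L_2)_\fp$ to be nonzero. Second, the ring $A=R_\fb$ in your key step is not complete, so the one-dimensional lemma should be stated for a homomorphic image of a Gorenstein local ring rather than for a complete ring; your actual argument (reduce mod $\HH^0_\fm(A)$ to the Cohen--Macaulay case and use $\Hom_A(K_A,K_A)\cong A$ together with the absence of nontrivial idempotents in a local ring) never uses completeness, so this is only a slip of wording. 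With those repairs the sketch is a faithful proof of the stated fact; the genuinely technical content, as you correctly identify, is the compatibility $(K_R)_\fb\cong K_{R_\fb}$, which is where equidimensionality and catenarity of the complete ring $R$ are consumed.
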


As complete  rings are catenary, the following may be considered as a slight generalization of Fact \ref{ho}.

\begin{corollary}\label{cat}Let $A$ be catenary
	and equidimensional. Let $\fp$ and $\fq$ be  prime ideals of $A$ of same height and   
	$\Ht(\fp+\fq)\geq\Ht (\fp)+2$.  Set $R:= \frac{A}{\fp\cap \fq}$. Then $\HH^{\dim R}_{\fm}(R)$ decomposable. 
	 In particular, $R$ is not quasi-Gorenstein.
\end{corollary}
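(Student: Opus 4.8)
The plan is to reuse the exact-sequence argument of Observation \ref{inter}: the Cohen--Macaulay hypothesis there was used only to guarantee the dimension identities $\dim A/I=\dim A/J=\dim R$ and $\dim A/(I+J)\le\dim R-2$, and these remain available when $A$ is merely catenary and equidimensional. So the first step is to record the dimension formula $\Ht(\fa)+\dim A/\fa=\dim A$, valid for every ideal $\fa$ of a catenary equidimensional local ring $(A,\fm)$. For a prime $\mathfrak P$ this follows by picking a minimal prime $\mathfrak p_0\subseteq\mathfrak P$: equidimensionality gives $\dim A/\mathfrak p_0=\dim A$, so $A/\mathfrak p_0$ is a catenary local \emph{domain} of dimension $\dim A$, in which $\Ht(\mathfrak P/\mathfrak p_0)+\dim A/\mathfrak P=\dim A$ (concatenate a saturated chain from $(0)$ to $\mathfrak P/\mathfrak p_0$ with one from $\mathfrak P/\mathfrak p_0$ to the maximal ideal and use catenariness); since $\Ht\mathfrak P$ is the largest of the $\Ht(\mathfrak P/\mathfrak p')$ over minimal primes $\mathfrak p'\subseteq\mathfrak P$, and each of these equals $\dim A-\dim A/\mathfrak P$ by the same computation, we get $\Ht\mathfrak P=\dim A-\dim A/\mathfrak P$; passing to a minimal prime of $\fa$ of least height then gives the formula for $\fa$. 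Applying it to $\fp$, $\fq$ and $\fp+\fq$, the hypotheses $\Ht\fp=\Ht\fq$ and $\Ht(\fp+\fq)\ge\Ht\fp+2$ yield $\dim A/\fp=\dim A/\fq=\dim R=:d$ and $\dim A/(\fp+\fq)\le d-2$.

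Next I would invoke the Mayer--Vietoris short exact sequence of $A$-modules
$$0\lo R\lo A/\fp\oplus A/\fq\lo A/(\fp+\fq)\lo 0,$$
with first map $\bar a\mapsto(\bar a,\bar a)$ and second map $(\bar a,\bar b)\mapsto\overline{a-b}$; exactness is immediate ($\fp\cap\fq$ is the kernel of the first map, and a pair whose difference lies in $\fp+\fq$ lifts to a common element). Taking the long exact sequence of $\HH^{\bullet}_{\fm}(-)$ and using Grothendieck's vanishing theorem, which gives $\HH^{d-1}_{\fm}(A/(\fp+\fq))=\HH^{d}_{\fm}(A/(\fp+\fq))=0$ because $\dim A/(\fp+\fq)\le d-2$, we obtain $\HH^{d}_{\fm}(R)\cong\HH^{d}_{\fm}(A/\fp)\oplus\HH^{d}_{\fm}(A/\fq)$, with both summands nonzero by Grothendieck's non-vanishing theorem (as $\dim A/\fp=\dim A/\fq=d$). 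Hence $\HH^{\dim R}_{\fm}(R)$ is decomposable. For the final clause it then suffices to note that $E_R(k)$, the injective hull of the simple module $k$, is indecomposable and so cannot be isomorphic to a decomposable module; thus $\HH^{\dim R}_{\fm}(R)\not\cong E_R(k)$, i.e. $R$ is not quasi-Gorenstein. (Alternatively, as in Observation \ref{inter}, pass to $\hat R$, apply Matlis duality to get a nontrivial decomposition of $K_{\hat R}$, deduce that $K_{\hat R}$ is not free of rank one, and conclude by Fact \ref{fk} together with the invariance of quasi-Gorensteinness under completion.)

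I do not expect a genuine obstacle here; the one point needing care is the dimension formula $\Ht(\fa)+\dim A/\fa=\dim A$, which in the absence of Cohen--Macaulayness must be argued directly and genuinely uses both hypotheses --- catenariness to make saturated chains have well-defined length, and equidimensionality to prevent a low-height minimal prime lying under a given prime from lowering its height. Everything after that is the verbatim exact-sequence-plus-Grothendieck argument already carried out in Observation \ref{inter}.
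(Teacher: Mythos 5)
Your proof is correct and takes essentially the same route as the paper: the dimension formula $\Ht(\fa)+\dim A/\fa=\dim A$ for catenary equidimensional local rings (which the paper simply cites from Matsumura, \S 31, Lemma 2, and which you prove directly) reduces everything to the Mayer--Vietoris sequence and the Grothendieck vanishing/non-vanishing argument already carried out in Observation \ref{inter}. Your direct conclusion via the indecomposability of $E_R(k)$ is a marginally cleaner finish than the paper's detour through $\hat R$, Matlis duality and Fact \ref{fk}, but it is the same argument in substance.
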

 
\begin{proof}
	The assumptions guarantee that $d:=\dim R=\dim A/\fp=\dim A/\fq=\dim A-\Ht(\fp)$, see  \cite[\S 31, Lemma  2]{Mat}.
	Also, we have $\dim(A/\fp+\fq)\leq \dim A-\Ht(\fp+\fq)\leq d-2$. By
the proof  of	Observation \ref{inter}  we get the claim.  
\end{proof}

\begin{proposition}\label{catg2}Let $(R,\fm)$ be catenary
		and equidimensional. Let $\fp_1,\ldots,\fp_n$ be  prime ideals of $R$ of same height, $n>1$ and  
		 $\Ht(\fp_i+\fp_{i+1})\geq\Ht (\fp_i)+2$ for all $1\leq i<n$. Set $d:=\dim \frac{R}{\fp_1\ldots\fp_n}$.
		  Then $\HH^d_{\fm}(\frac{R}{\fp_1\ldots\fp_n})$ decomposes into $n$ nonzero submodules.  
		   In particular, $\frac{R}{\fp_1\ldots\fp_n}$ is not quasi-Gorenstein.
	\end{proposition}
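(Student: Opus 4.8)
The plan is to follow the pattern of Observation \ref{inter} and Corollary \ref{cat}, splitting off one prime at a time by induction on $n$. Write $I:=\fp_1\cdots\fp_n$ and $J:=\fp_1\cap\cdots\cap\fp_n$; the $\fp_i$ have a common height $h:=\Ht(\fp_1)$ and (as the statement intends) are pairwise distinct, so $J=\sqrt I$ and $\Min(R/I)=\{\fp_1,\dots,\fp_n\}$. The first step is to replace $I$ by $J$. The kernel of $R/I\twoheadrightarrow R/J$ is the ideal $J/I$ of $R/I$; localizing at any $\fp_i$ one has $(\fp_j)_{\fp_i}=R_{\fp_i}$ for $j\neq i$ (two distinct primes of equal height are incomparable), whence $I_{\fp_i}=(\fp_i)_{\fp_i}=J_{\fp_i}$ and $(J/I)_{\fp_i}=0$. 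Since $R$ is catenary and equidimensional, $\dim R/\fp_i=\dim R-h=d$ for every $i$, and these are the only primes of $\Supp(R/I)$ of dimension $d$; as $\Supp(J/I)\subseteq\Supp(R/I)$ misses all of them, $\dim(J/I)<d$. Grothendieck's vanishing theorem then kills $\HH^{d}_{\fm}(J/I)$ and $\HH^{d+1}_{\fm}(J/I)$, so the long exact sequence of $0\to J/I\to R/I\to R/J\to 0$ gives $\HH^{d}_{\fm}(R/I)\cong\HH^{d}_{\fm}(R/J)$; it therefore suffices to decompose $\HH^{d}_{\fm}(R/J)$.

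Next I would record the dimension estimate: by \cite[\S 31, Lemma 2]{Mat}, $\dim R/(\fp_i+\fp_j)=\dim R-\Ht(\fp_i+\fp_j)\leq d-2$ whenever $\Ht(\fp_i+\fp_j)\geq h+2$. Then induct on $n$, the inductive claim being that $\HH^{d}_{\fm}(R/(\fp_{i_1}\cap\cdots\cap\fp_{i_m}))\cong\HH^{d}_{\fm}(R/\fp_{i_1})\oplus\cdots\oplus\HH^{d}_{\fm}(R/\fp_{i_m})$, each summand nonzero by Grothendieck's non-vanishing theorem since each $R/\fp_{i_t}$ is $d$-dimensional. The case $n=2$ is Corollary \ref{cat}: from $0\to R/(\fp_1\cap\fp_2)\to R/\fp_1\oplus R/\fp_2\to R/(\fp_1+\fp_2)\to 0$ and the vanishing of $\HH^{d-1}_{\fm}$ and $\HH^{d}_{\fm}$ of $R/(\fp_1+\fp_2)$ one reads off the splitting. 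For the inductive step put $J':=\fp_2\cap\cdots\cap\fp_n$, so $J=\fp_1\cap J'$, and consider
$$0\lo R/J\lo R/\fp_1\oplus R/J'\lo R/(\fp_1+J')\lo 0.$$
Since $\V(\fp_1+J')=\bigcup_{j\geq2}\V(\fp_1+\fp_j)$, we get $\dim R/(\fp_1+J')\leq d-2$, so $\HH^{d-1}_{\fm}$ and $\HH^{d}_{\fm}$ of $R/(\fp_1+J')$ vanish and the long exact sequence yields $\HH^{d}_{\fm}(R/J)\cong\HH^{d}_{\fm}(R/\fp_1)\oplus\HH^{d}_{\fm}(R/J')$. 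As $\HH^{d}_{\fm}(R/\fp_1)\neq0$ and, by induction, $\HH^{d}_{\fm}(R/J')$ decomposes into $n-1$ nonzero submodules, $\HH^{d}_{\fm}(R/J)$ decomposes into $n$ nonzero submodules; transporting along the isomorphism of the first paragraph gives the decomposition of $\HH^{d}_{\fm}(R/I)$.

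For the final clause I would argue as at the end of Observation \ref{inter}. Faithfully flat base change along $R\to\widehat{R/I}$ commutes with local cohomology, so $\HH^{d}_{\fm}(\widehat{R/I})$ decomposes into $n\geq2$ nonzero submodules; applying the Matlis functor over the complete ring $\widehat{R/I}$ and using $K_{\widehat{R/I}}\cong\HH^{d}_{\fm}(\widehat{R/I})^{v}$ turns this into a nontrivial direct-sum decomposition of $K_{\widehat{R/I}}$. Hence $K_{\widehat{R/I}}$ is not free of rank one, so by Fact \ref{fk} the ring $\widehat{R/I}$, and therefore $R/I$, is not quasi-Gorenstein.

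The step I expect to be the crux is the estimate $\dim R/(\fp_1+J')\leq d-2$ inside the induction: it uses $\Ht(\fp_1+\fp_j)\geq h+2$ for \emph{every} $j$, not just for the consecutive pair, so the real input is that the ``far apart'' condition survives the reindexing performed by the induction — equivalently, that $\Ht(\fp_i+\fp_j)\geq h+2$ holds for all $i\neq j$, which is precisely what forces the dual graph of $R/J$ to be totally disconnected and hence produces exactly $n$ components. If one only has the consecutive estimates, the displayed sequence merely presents $\HH^{d}_{\fm}(R/J)$ as an extension of $\HH^{d}_{\fm}(R/\fp_1)\oplus\HH^{d}_{\fm}(R/J')$ by a quotient of $\HH^{d-1}_{\fm}(R/(\fp_1+J'))$, and a separate argument would be needed to see this extension splits.
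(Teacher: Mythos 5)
Your proof follows the same route as the paper's: reduce the product to the intersection via a short exact sequence whose kernel has dimension $<d$, then split off one prime at a time with the Mayer--Vietoris sequence $0\to R/(\fp_1\cap J')\to R/\fp_1\oplus R/J'\to R/(\fp_1+J')\to 0$; your write-up is in fact more careful than the paper's (the paper performs the product-to-intersection reduction only for two primes and then says ``by repeating the above argument''). The crux you flag at the end is a genuine issue, and it is an issue with the statement and with the paper's own proof, not with your execution. The inductive step needs $\dim R/(\fp_1+J')\le d-2$, i.e.\ $\Ht(\fp_1+\fp_j)\ge \Ht(\fp_1)+2$ for \emph{every} $j\ge 2$, whereas the hypothesis only constrains consecutive pairs; the paper's ``repeating the above argument'' silently uses the same all-pairs condition. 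The consecutive-only hypothesis cannot suffice for the full claim of $n$ summands: take $S:=k[[a,b,c,d,e]]$ with $\fp_1:=(a,b)$, $\fp_2:=(d,e)$, $\fp_3:=(a,c)$, so that $\Ht(\fp_1+\fp_2)=\Ht(\fp_2+\fp_3)=4=\Ht(\fp_1)+2$ but $\Ht(\fp_1+\fp_3)=3$. Here $d=3$, and by Fact \ref{ho} the module $\HH^{3}_{\fm}(S/(\fp_1\cap\fp_3))$ is indecomposable, so $\HH^{3}_{\fm}(S/\fp_1\fp_2\fp_3)\cong \HH^{3}_{\fm}(S/(\fp_1\cap\fp_3))\oplus\HH^{3}_{\fm}(S/\fp_2)$ has exactly two indecomposable summands; by Krull--Schmidt for artinian modules over a complete local ring it cannot be written as a direct sum of three nonzero submodules. (The weaker ``in particular'' conclusion, non-quasi-Gorensteinness, survives, since decomposability into two pieces is enough.) So the proposition should be read with the hypothesis $\Ht(\fp_i+\fp_j)\ge\Ht(\fp_i)+2$ for all $i\neq j$ — which also guarantees the pairwise distinctness you assume in your first paragraph — and under that reading your argument is complete and correct. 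Two cosmetic points: the base change at the end is along $R/I\to\widehat{R/I}$ (the map $R\to\widehat{R/I}$ is not flat), and the non-vanishing of each $\HH^{d}_{\fm}(R/\fp_i)$ uses $\dim R/\fp_i=d$, which you correctly derive from catenarity and equidimensionality.
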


\begin{proof} We argue by induction on $n\geq 2$. First, we deal with the case $n:=2$
	and for simplicity we set $\fp:=\fp_1$ and $\fq=\fp_2$.
The assumption $\Ht(\fp+\fq)\geq\Ht (\fp)+2$ implies that $\fp\neq \fq$.
Since $\fp$ and $\fq$ are of same height, $\fq\nsubseteqq \fp$. Thus, $\fq R_{\fp}=R_\fp$. 
Also, flat extensions behave well with respect to the intersection of ideals. Consequently, $(\frac{\fp\cap \fq}{\fp\fq})_\fp=0$.
This yields that
 $\dim(\frac{\fp\cap\fq}{\fp\fq})<\dim(\frac{R}{\fp\fq})$. By Grothendiek's vanishing theorem, $\HH^{>d-1}_{\fm}(\frac{\fp\cap \fq}{\fp\fq})=0$.
		We look at the short exact sequence   $0\to \frac{\fp\cap \fq}{\fp\fq}\to \frac{R}{\fp\fq}\to\frac{R}{\fp\cap \fq}\to 0$.
		 This induces the following exact sequence
	$$0= \HH^d_{\fm}(\frac{\fp\cap \fq}{\fp\fq})\lo\HH^d_{\fm}(\frac{R}{\fp\fq})\lo 
	\HH^d_{\fm}(\frac{R}{\fp\cap \fq})\lo\HH^{d+1}_{\fm}(\frac{\fp\cap \fq}{\fp\fq})=0.$$
We plug this in  Corollary \ref{cat} to get a nontrivial decomposition $\HH^d_{\fm}(\frac{R}{\fp\fq})\cong
 \HH^d_{\fm}(\frac{R}{\fp\cap \fq})\cong\HH^d_{\fm}(\frac{R}{\fp})\oplus \HH^d_{\fm}(\frac{R}{\fq})$.  This completes
the proof when $n=2$. Now suppose, inductively, that $n\geq 3$, and the result has been
proved for  $n-1 $. By repeating the above   argument, we see  $ \HH^d_{\fm}(\frac{R}{\fp_1\ldots\fp_n})\cong
 \HH^d_{\fm}(\frac{R}{\fp_1\ldots\fp_{n-1}})\oplus \HH^d_{\fm}(\frac{R}{\fp_n})$. By  the inductive step,  
 $\HH^d_{\fm}(\frac{R}{\fp_1\ldots\fp_n})\cong\bigoplus_{i=1}^n\HH^d_{\fm}(\frac{R}{\fp_{i}})$. 
In the light of Grothendieck's non-vanishing theorem, $\HH^d_{\fm}(\frac{R}{\fp_{i}})\neq 0$. 
This completes the proof.
\end{proof}

\begin{corollary}\label{g}Let $R$ be any  noetherian local ring. Let $\fp_1,\ldots,\fp_n$ be 
	 prime ideals of $R$ of same codimension, $n>1$ and   $\dim(\frac{R} {\fp_i+\fp_{i+1}})\leq\dim \frac{R} {\fp_i} -2$
	  for all $1\leq i<n$. Then $\frac{R}{\fp_1\ldots\fp_n}$ is not quasi-Gorenstein.
\end{corollary}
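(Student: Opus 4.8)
The plan is to reduce Corollary \ref{g} to Proposition \ref{catg2}. Since a local ring is quasi-Gorenstein exactly when its completion is, and $\widehat{R/\fp_1\cdots\fp_n}\cong\hat R/(\fp_1\hat R)\cdots(\fp_n\hat R)$, I first replace $R$ by $\hat R$, so that $R$ becomes catenary. Because the $\fp_i$ have the same codimension, none of them is contained in another, hence they are precisely the minimal primes of $R/\fp_1\cdots\fp_n$. If the integers $\dim R/\fp_i$ are not all equal, then $R/\fp_1\cdots\fp_n$ is not equidimensional, and therefore not quasi-Gorenstein (recall that quasi-Gorenstein local rings are equidimensional, as $\Ass K_R=\Assh R$), so there is nothing left to prove. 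Thus I may assume $\dim R/\fp_i=d:=\dim R/\fp_1\cdots\fp_n$ for every $i$.

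With $R$ catenary and all $\dim R/\fp_i$ equal to $d$, the hypothesis $\dim R/(\fp_i+\fp_{i+1})\le\dim R/\fp_i-2$ is equivalent to $\Ht(\fp_i+\fp_{i+1})\ge\Ht\fp_i+2$, and ``same codimension'' becomes ``same height''; these are exactly the numerical hypotheses of Proposition \ref{catg2}. If, in addition, the $\fp_i$ were still prime in $\hat R$ and $\hat R$ itself were equidimensional, Proposition \ref{catg2} would apply verbatim and give that $\HH^d_\fm(R/\fp_1\cdots\fp_n)$ splits into $n$ nonzero submodules, so $R/\fp_1\cdots\fp_n$ would fail to be quasi-Gorenstein.

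The step I expect to be the real obstacle is exactly that proviso: completion can destroy both the primeness of the $\fp_i$ and the equidimensionality of the ambient ring, even when $R/\fp_1\cdots\fp_n$ is equidimensional. I would handle it by performing the local cohomology computation of Proposition \ref{catg2} over $\hat R$ directly, using for each $i$ the finite set of minimal primes $\mathfrak P_{i,1},\ldots,\mathfrak P_{i,k_i}$ of $\fp_i\hat R$ in place of $\fp_i$. Two observations make this go through: first, once $R/\fp_1\cdots\fp_n$ is equidimensional, each $\hat R/\fp_i\hat R$ is equidimensional of dimension $d$, so every $\mathfrak P_{i,j}$ satisfies $\dim\hat R/\mathfrak P_{i,j}=d$; second, since $\dim R/(\fp_i+\fp_j)<d$ whenever $i\ne j$, no prime of $\hat R$ containing $\fp_i\hat R+\fp_j\hat R$ has dimension $d$, which is precisely what forces the ``correction terms'' occurring in the short exact sequences of Proposition \ref{catg2} to have dimension $<d$ and hence vanishing top local cohomology. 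Granting this bookkeeping, peeling the ideals $\fp_i\hat R$ off one at a time as in Proposition \ref{catg2} yields a nontrivial direct sum decomposition of $\HH^d_\fm\big(\hat R/(\fp_1\cdots\fp_n)\hat R\big)$; applying Matlis duality converts it into a nontrivial decomposition of the canonical module $K_{\hat R/(\fp_1\cdots\fp_n)\hat R}$, which therefore is not free of rank one, so $R/\fp_1\cdots\fp_n$ is not quasi-Gorenstein by Fact \ref{fk}.
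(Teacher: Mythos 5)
Your overall strategy---feed the dimension hypotheses of the corollary into the Mayer--Vietoris computation of Proposition \ref{catg2} and finish with Matlis duality and Fact \ref{fk}---is the intended one, and your opening reductions (incomparability of the $\fp_i$, and disposing of the case where the $\dim R/\fp_i$ are not all equal via equidimensionality of quasi-Gorenstein rings) are fine. The trouble is the completion detour. Your key bookkeeping claim, that each $\hat{R}/\fp_i\hat{R}$ is equidimensional of dimension $d$ because $R/\fp_i$ is a $d$-dimensional domain, is false in general: the completion of a noetherian local domain need not be equidimensional (Nagata's examples), so a minimal prime $\mathfrak{P}_{i,j}$ of $\fp_i\hat{R}$ may well satisfy $\dim \hat{R}/\mathfrak{P}_{i,j}<d$. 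Moreover $(\fp_1\cdots\fp_n)\hat{R}$ is not the product of the ideals $\mathfrak{P}_{i,j}$, so ``peeling off'' those primes one at a time is not Proposition \ref{catg2} applied over $\hat{R}$; as written this step would fail.

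The fix is to notice that the detour is unnecessary. The only role of the catenary and equidimensional hypotheses in Observation \ref{inter}, Corollary \ref{cat} and Proposition \ref{catg2} is to convert the height assumptions into the two dimension statements $\dim R/\fp_i=d$ for all $i$ and $\dim R/(\fp_i+\fp_{i+1})\leq d-2$; in Corollary \ref{g} these are, after your first reduction, hypotheses. So run the exact sequences of Proposition \ref{catg2} over $R$ itself---Grothendieck's vanishing and non-vanishing theorems need only these dimensions, not catenarity---and obtain the decomposition of $\HH^{d}_{\fm}(R/\fp_1\cdots\fp_n)$ into $n$ nonzero summands already over $R$. Completion enters only at the very last step: by flat base change, $\HH^{d}_{\fm}(R/\fp_1\cdots\fp_n)\cong\HH^{d}_{\hat{\fm}}(\widehat{R/\fp_1\cdots\fp_n})$, so the decomposition passes to the completion for free, Matlis duality turns it into a nontrivial decomposition of $K_{\widehat{R/\fp_1\cdots\fp_n}}$, and Fact \ref{fk} concludes. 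No statement about the primes of $\hat{R}$ lying over the $\fp_i$ is ever needed.
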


\begin{example}
	The bound  $\Ht(\fp_1+\fp_2)\geq\Ht (\fp_1)+2$ is sharp. It is enough to look at $\fp_1:=(x)$ and $\fp_2:=(y)$ in $R:=k[[x,y]]$. 
\end{example}

\begin{corollary}\label{gcm}Let $R$ be any  noetherian local ring. Let $\fp,\fq$ be  prime ideals of $R$
	 of same codimension such that  $\dim(\frac{R} {\fp+\fq})\leq\dim \frac{R} {\fp} -2$. 
	 Then $\frac{R}{\fp\fq}$ is not Cohen-Macaulay.
\end{corollary}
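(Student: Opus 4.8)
The plan is to argue by contradiction: I will assume that $\bar R:=R/\fp\fq$ is Cohen-Macaulay and produce a nontrivial direct-sum decomposition of $\HH^{\dim\bar R}_{\fm}(\bar R)$, which a Cohen-Macaulay local ring cannot have.

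First I would dispose of the elementary points. Because $\fp$ and $\fq$ have the same codimension, a strict containment of one in the other would force a strict inequality of heights; together with the hypothesis $\dim R/(\fp+\fq)\le\dim R/\fp-2$ (which already rules out $\fp=\fq$: otherwise $\fp+\fq=\fp$ and the hypothesis would read $\dim R/\fp\le\dim R/\fp-2$), this shows $\fp$ and $\fq$ are incomparable. Hence $\V(\fp\fq)=\V(\fp)\cup\V(\fq)$ with $\fp$ and $\fq$ both minimal, so $\Min(\bar R)=\{\fp,\fq\}$ and these two primes are distinct. Since $\bar R$ is (assumed) Cohen-Macaulay it is equidimensional, so $d:=\dim\bar R=\dim R/\fp=\dim R/\fq$ and the hypothesis becomes $\dim R/(\fp+\fq)\le d-2$; moreover $\Supp((\fp\cap\fq)/\fp\fq)\subseteq\V(\fp)\cap\V(\fq)=\V(\fp+\fq)$, so $\dim((\fp\cap\fq)/\fp\fq)\le d-2$.

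Next I would run the local cohomology computation from the proofs of Corollary \ref{cat} and Proposition \ref{catg2} (case $n=2$). The only inputs those computations use are the numerical facts just secured, namely $\dim R/\fp=\dim R/\fq=d$, $\dim R/(\fp+\fq)\le d-2$ and $\dim((\fp\cap\fq)/\fp\fq)<d$; primeness of $\fp,\fq$ and catenarity of the ambient ring entered there only in order to guarantee these numerical facts, which here are guaranteed instead by the Cohen-Macaulay (equidimensionality) assumption on $\bar R$. Concretely, applying Grothendieck's vanishing theorem to the long exact sequences of local cohomology attached to $0\to(\fp\cap\fq)/\fp\fq\to\bar R\to R/(\fp\cap\fq)\to0$ and to $0\to R/(\fp\cap\fq)\to R/\fp\oplus R/\fq\to R/(\fp+\fq)\to0$ yields isomorphisms $\HH^d_{\fm}(\bar R)\cong\HH^d_{\fm}(R/(\fp\cap\fq))\cong\HH^d_{\fm}(R/\fp)\oplus\HH^d_{\fm}(R/\fq)$, and both summands are nonzero by Grothendieck's non-vanishing theorem since $R/\fp$ and $R/\fq$ both have dimension $d$.

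Finally I would contradict this. A Cohen-Macaulay local ring $\bar R$ of dimension $d$ has indecomposable top local cohomology: passing to $\widehat{\bar R}$, which is again Cohen-Macaulay, one has $\HH^d_{\fm}(\bar R)=\HH^d_{\fm}(\widehat{\bar R})$, whose Matlis dual is the canonical module $\omega_{\widehat{\bar R}}$; and $\omega_{\widehat{\bar R}}$ is indecomposable because $\Hom_{\widehat{\bar R}}(\omega_{\widehat{\bar R}},\omega_{\widehat{\bar R}})\cong\widehat{\bar R}$ is local and so has no idempotents other than $0$ and $1$. The Matlis dual of an indecomposable finitely generated module being indecomposable, $\HH^d_{\fm}(\bar R)$ is indecomposable, contradicting the decomposition of the preceding step; hence $R/\fp\fq$ is not Cohen-Macaulay. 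The step I expect to demand the most care is checking that the equalities $\dim R/\fp=\dim R/\fq=d$ and the bound $\dim((\fp\cap\fq)/\fp\fq)\le d-2$ really are forced merely by the Cohen-Macaulay hypothesis on $\bar R$ (without it one cannot cleanly separate $\V(\fp)$ from $\V(\fq)$); everything else is a routine unwinding of long exact sequences. As an alternative ending, one may invoke connectedness: a complete equidimensional local ring satisfying Serre's condition $(S_2)$ --- in particular $\widehat{\bar R}$ --- is connected in codimension one, whereas removing the codimension $\ge2$ closed subset $\V(\fp+\fq)$ disconnects $\Spec\widehat{\bar R}=\V(\fp)\cup\V(\fq)$.
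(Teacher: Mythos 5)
Your proof is correct and follows essentially the same route as the paper: assume $R/\fp\fq$ is Cohen--Macaulay, split $\HH^d_{\fm}(R/\fp\fq)\cong \HH^d_{\fm}(R/\fp)\oplus\HH^d_{\fm}(R/\fq)$ via the two short exact sequences and Grothendieck vanishing/non-vanishing, then pass to the completion and apply Matlis duality to contradict the indecomposability of $\omega_{\widehat{R/\fp\fq}}$. The only difference is that the paper obtains the decomposition by citing Corollary \ref{g} (which descends from Proposition \ref{catg2} and nominally carries catenary and equidimensionality hypotheses), whereas you rederive it directly and correctly observe that the Cohen--Macaulay assumption itself supplies the needed numerics ($\dim R/\fp=\dim R/\fq=\dim R/\fp\fq$ and $\dim\bigl((\fp\cap\fq)/\fp\fq\bigr)\leq d-2$) --- a welcome bit of extra care at the one point the paper leaves implicit.
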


\begin{proof}
On the way of contradiction we assume that	$\frac{R}{\fp\fq}$ is   Cohen-Macaulay.
Let $d:=\dim \frac{R}{\fp\fq}$. By Corollary \ref{g}, $\HH^d_{\fm}(\frac{R}{\fp\fq})$ equipped 
with a nontrivial decomposition.
The same thing holds for $\HH^d_{\fm}(\widehat{\frac{R}{\fp\fq}})$. By Matlis
duality, $\omega_{\widehat{\frac{R}{\fp\fq}}}$ decomposes into nontrivial submodules. This is a contradiction.	
\end{proof}

\begin{corollary} 
	Adopt the notation of 	Corollary  \ref{g}. Then $\frac{R}{\fp_1\ldots\fp_n}$ is not  Cohen-Macaulay.
\end{corollary}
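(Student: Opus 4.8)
The plan is to repeat the contradiction argument of Corollary \ref{gcm}, now exploiting Corollary \ref{g} (and through it Proposition \ref{catg2}) in full generality rather than in its two-prime special case. So suppose, toward a contradiction, that $T:=\frac{R}{\fp_1\ldots\fp_n}$ is Cohen-Macaulay, and set $d:=\dim T$.

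The first step is to note that the proof of Corollary \ref{g} --- precisely as it is used inside the proof of Corollary \ref{gcm} --- gives more than the failure of quasi-Gorensteinness: it produces a nontrivial direct-sum decomposition of $\HH^d_{\fm}(T)$ coming from Proposition \ref{catg2}. Here one uses that $\widehat T$ is catenary and, since $T$ is Cohen-Macaulay hence equidimensional, that $\dim\frac{R}{\fp_i}=d$ for every $i$, so that the decomposition $\HH^d_{\fm}(T)\cong\bigoplus_{i=1}^{n}\HH^d_{\fm}(\frac{R}{\fp_i})$ furnished by Proposition \ref{catg2} has all $n$ summands nonzero by Grothendieck's non-vanishing theorem; since $n>1$ it is nontrivial, and for what follows only two nonzero summands are needed.

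Next I would pass to the completion and dualize, exactly as in Corollary \ref{gcm}. Since $T$ is Cohen-Macaulay so is $\widehat T$, which therefore possesses a canonical module with $\omega_{\widehat T}\cong\HH^d_{\fm}(\widehat T)^v$, where $(-)^v$ denotes Matlis duality over $\widehat T$; and by flat base change $\HH^d_{\fm}(\widehat T)\cong\HH^d_{\fm}(T)\otimes_R\widehat R$ still carries the nontrivial decomposition of the previous step. Applying $(-)^v$ --- which sends nonzero Artinian modules to nonzero finitely generated ones --- we obtain $\omega_{\widehat T}\cong M_1\oplus M_2$ with $M_1,M_2\neq 0$.

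Finally, $\Hom_{\widehat T}(\omega_{\widehat T},\omega_{\widehat T})\cong\widehat T$ is a local ring and hence has no nontrivial idempotents, so $\omega_{\widehat T}$ is indecomposable; this contradicts the decomposition just produced, and therefore $T$ is not Cohen-Macaulay. I do not expect a genuinely new obstacle here: all the geometric content already resides in Corollary \ref{g} and Proposition \ref{catg2}, and the only point requiring a little care --- the same one as in Corollary \ref{gcm} --- is the reduction to the completion and the check that the local cohomology decomposition survives flat base change and dualizes into a bona fide direct-sum decomposition of the canonical module.
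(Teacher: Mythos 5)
Your proposal is correct and is exactly the argument the paper intends: it combines the decomposition $\HH^d_{\fm}(\frac{R}{\fp_1\ldots\fp_n})\cong\bigoplus_i\HH^d_{\fm}(\frac{R}{\fp_i})$ from Corollary \ref{g} (via Proposition \ref{catg2}) with the completion-and-Matlis-duality argument of Corollary \ref{gcm}. The only addition is that you spell out why $\omega_{\widehat{T}}$ is indecomposable (no nontrivial idempotents in $\Hom_{\widehat{T}}(\omega_{\widehat{T}},\omega_{\widehat{T}})\cong\widehat{T}$), a step the paper leaves implicit.
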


	\begin{corollary} 
	Adopt the notation of Observation \ref{inter}. Then $R:=\frac{A}{I\cap J}$ is   not Cohen-Macaulay.
\end{corollary}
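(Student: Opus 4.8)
The plan is to run the argument of Corollary~\ref{gcm} essentially verbatim, with Observation~\ref{inter} playing the role of Corollary~\ref{g}. Suppose, for a contradiction, that $R:=A/(I\cap J)$ is Cohen-Macaulay, and put $d:=\dim R$. The proof of Observation~\ref{inter} already supplies a nontrivial direct sum decomposition $\HH^{d}_{\fm}(R)\cong\HH^{d}_{\fm}(A/I)\oplus\HH^{d}_{\fm}(A/J)$, both summands being nonzero by Grothendieck's non-vanishing theorem since $\dim A/I=\dim A/J=d$. So nothing new is needed up to this point.

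Next I would descend to the completion. Since local cohomology commutes with the flat base change $R\to\hat R$, the $\hat R$-module $\HH^{d}_{\fm\hat R}(\hat R)\cong\HH^{d}_{\fm}(R)\otimes_R\hat R$ inherits the nontrivial decomposition, and $\hat R$ is still Cohen-Macaulay of dimension $d$. Being complete and Cohen-Macaulay, $\hat R$ has a canonical module $\omega_{\hat R}$, and by the definition recalled before Fact~\ref{fk} one has $\omega_{\hat R}\cong\HH^{d}_{\fm\hat R}(\hat R)^{v}$, where $(-)^{v}:=\Hom_{\hat R}(-,E_{\hat R}(k))$. As Matlis duality is exact and sends a finite direct sum of nonzero modules to a finite direct sum of nonzero modules, $\omega_{\hat R}$ decomposes as a direct sum of two nonzero $\hat R$-submodules.

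Finally I would invoke the indecomposability of the canonical module of a complete Cohen-Macaulay local ring: the natural map $\hat R\to\Hom_{\hat R}(\omega_{\hat R},\omega_{\hat R})$ is an isomorphism, so $\Hom_{\hat R}(\omega_{\hat R},\omega_{\hat R})$ is a commutative local ring and hence has no idempotents other than $0$ and $1$; therefore $\omega_{\hat R}$ is indecomposable. This contradicts the decomposition obtained above, so $R$ cannot be Cohen-Macaulay. The whole argument is bookkeeping once Observation~\ref{inter} is in hand; the one point deserving care -- which I view as the substantive input rather than a genuine obstacle -- is the passage through Matlis duality, where one must complete before dualizing (so that the decomposed module really is the Matlis dual of $\omega_{\hat R}$) and must check that neither dualized summand collapses to zero.
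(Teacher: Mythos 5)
Your argument is correct and is essentially the paper's own proof: the paper simply says to combine Observation \ref{inter} with the argument of Corollary \ref{gcm}, which is exactly the completion--Matlis-duality step you carry out. The only difference is that you make explicit the final contradiction (indecomposability of $\omega_{\hat R}$ via $\hat R\cong\Hom_{\hat R}(\omega_{\hat R},\omega_{\hat R})$ being local), which the paper leaves unstated.
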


\begin{proof}
Combine  Observation \ref{inter} along with the argument of Corollary \ref{gcm}.
\end{proof}


\section{Negative side of Question 1.1}

Over a Cohen-Macaulay local ring $(R, \fm)$ we have 
$\mu(\fm) - \dim R + 1 \leq \e(R)$. If the equality holds we say $R$ is of minimal multiplicity.
Here,  we show   the multiplicity  two  (resp. quasi-Gorenstein) assumption of Corollary \ref{cmm2} (resp. Proposition \ref{quasi})  is important.
Also,   both  assumptions $\dim S<4$ and
$\mu(J)<3$ (resp. 1-dimensional assumption) of Observation \ref{em} (resp. Fact \ref{1}) are really needed.

\begin{example}	\label{nred}For each $n>0$, set $	R_n:= \frac{\mathbb{Q}[X, Y, Z, W]_{\fm}}{(XY - ZW, W^n, YW)}$.
	\begin{enumerate}
		\item[i)]    Question 1.1  has negative answer over $R_n$ if and only if $n=1$.
	\item[ii)] $R_n$  is Cohen-Macaulay if and only if $n<3$.
		\item[iii)] $R_2$ is two-dimensional,
		generically Gorenstein, Cohen-Macaulay, almost complete-intersection, of type two and of minimal multiplicity $3$.
	\end{enumerate}  
\end{example}

\begin{proof}
The   ring $R_1$ is  hypersurface. By Fact \ref{gor}, we get the claim for $n=1$. Then we may assume that $n>1$. We set $u:=x$ and  $\fp:=(x,w)$. 
Recall that $xy\in(z)$, and so $xy\in\rad(x+y, z)$. Since $x^2=x(x+y)-xy=x(x+y)-zw$
	we see $x\in\rad(x+y, z)$. Similarly, $y\in\rad(x+y, z)$. 
		Also, $w\in\rad(x+y, z)$, because it is nil.
Hence, $\rad(x+y,z)=\fm$. Since $\Ass(R_2)=\{\fp, (w ,y)\}$, $\dim (R_n)=2$.
 Thus $ \{x+y, z\}$	is a parameter sequence. 
 
  Set $P:=\mathbb{Q}[X, Y, Z, W]_{\fm}$. The free 
   resolution of $R_2$ over $P$ is  $0\to P^2\stackrel{A}\lo P^3\to P\to R_2\to 0 ,$
  where \begin{equation*}	A:= \left(
  \begin{array}{cccc}
  -W	& 0     \\
  X 	&	-W     \\
  -Z 	& Y   
  \end{array}  \right) .
  \end{equation*}
Since $\pd_P( R_2)=2$, and  in view of Auslander-Buchsbaum formula, we deduce that $R_2$
	is Cohen-Macaulay.
	Suppose $n>2$. The primary
	decomposition of $J:=(XY - ZW, W^n, YW)$
	is given by $$(W, X)\cap (W^2 , Y W, Y^2 , X Y - Z W)\cap(Z, Y, W^n ).$$
	Then,
 $\Ass(R_n)=\{\fp, (w,y),  (w,y,z)\}$. Since
	$R_n$ has an embedded prime ideal, it is not Cohen-Macaulay. Recall that $\fm\notin\Ass(R_n)$
	we deduce that $\depth(R_n)=1$.

From 	$xy^2=xy^2-yzw=	y(xy - zw)=0$,  we conclude that $y^2\in (0:u)$. We claim  that $(y^2)= (0:u)$.
If $n=2$ this follows from the primary decomposition $(0)=(y^2)\cap (x,w)$. Now, let $n>2$.
The only primary components of $J$ that contains $X$ is $(W, X)$.
Now we compute the intersection
of reminder: $$I:=(W^2 , Y W, Y^2 , X Y - Z W)\cap(Z, Y, W^n )=(W^n , Y W, Y^2 , X Y - Z W).$$
Since $w^n=yw=xy-zw=0$,  the image of $I$ in $R_n$ is $(y^2)$. From this $(0:x)=(y^2)$.
We conclude from $y^2=y(x+y)-yx=y(x+y)-wz$ that $(0:u)$ is   in an ideal generated by a system of parameters.

Since  $\omega_{R_2}=(y,w )R_2$ we know that   $R_2$ is  
	generically Gorenstein and of type two. 
Due to the equality $\fm^2=(x+y,z)\fm$ we remark that $(x+y,z)R_2$ is a reduction of $\fm$.
It turns out that
$\e(R_2)=\e(x+y, z;R_2)$. The  chain  $(x+y,z)\subset(x+y,z,y)\subset(x+y,z,y,w)\subset R_2$ shows   that $\ell(R_2/(x+y, z))=3$. 
Since
$R_2$ is  Cohen-Macaulay, $\e(R_2)=\e(x+y, z,v+w;R_2)=\ell(R_2/(x+y, z,v+w))=3.$
In particular, $R_2$ is of minimal multiplicity.
\end{proof}

Here, we present  an  example of multiplicity two. In particular, the Cohen-Macaulay  
assumption of Corollary \ref{cmm2}   is important.

\begin{example}	\label{red} Let $R:= \frac{\mathbb{Q}[ X, Y, Z, W,V]_{\fm}}{(XY - ZW, WV, YW)}$. Then 
	   Question 1.1  has positive answer over $R$ for certain $\fp$ and $u$. Also,
$\e(R)=2$ and $\depth(R)=2<3=\dim R$.
\end{example}

\begin{proof}
Recall that $xy\in(z)$, and so $xy\in\rad(x+y, z)$. Since $x^2=x(x+y)-xy$
we see $x\in\rad(x+y, z)$, and $y\in\rad(x+y, z)$.
In view of $v^2=v(v+w)$ we see $v\in\rad(x+y, z,v+w)$. In the same vein, $w^2\in (x+y, z,v+w)$.  In sum, $\rad(x+y,z,v+w)=\fm$.
In order to show $ \{x+y, z,v+w\}$	is a parameter sequence, we remark that $\dim R=3$. To see this,  we recall
 $\Ass(R)=\{(v, z, y),  (w, x),   (w, y),  (v, w, y)\}$.  In fact, the primary
decomposition of $J:=(XY - ZW, WV, YW)$
is given by $$(V, Z, Y)\cap (W, X)\cap(W, Y)\cap (V, W^2 , YW, Y^2 , XY - ZW).$$
 Since
$R$ has an embedded associated prime ideal, it is not Cohen-Macaulay.   Set $P:=\mathbb{Q}[X, Y, Z, W,V]_{\fm}$.
 The projective 
resolution of $R$,  as a $P$-module, is given by  $0\to  P\stackrel{B}\lo P^3\stackrel{A}\lo P^3\to P\to R \to 0 ,$ 
where \begin{equation*}	A:= \left(
\begin{array}{cccc}
0	& -YW    & -WV  \\
-V 	&XY-ZW   & XV  \\
Y 	&  0     & -ZW   
\end{array}  \right)\quad and\quad B:= \left(
\begin{array}{cccc}
ZW	     \\
-V 	    \\
Y 	 
\end{array}  \right).
\end{equation*}
Since $\pd_P(R)=3$
we deduce that $\depth(R)=2$.
 We set $u:=x$ and  $\fp:=(x,w)$. 
From 	$xy^2=xy^2-yzw=	y(xy - zw)=0$,  we conclude that $y^2\in (0:u)$.
From 	$xyv=xyv-zvw=	v(xy - zw)=0$,  we conclude that $yv\in (0:u)$.
Thus, $(y^2,yv)\subset (0:u)$. We are going to show the reverse inclusion.
The only primary components of $J$ that contains $X$ is $(W, X)$. Now we compute the intersection
of reminder:
 $$I:=(V, Z, Y)\cap(W, Y)\cap (V, W^2 , YW, Y^2 , XY - ZW)=(WV, YV, YW, Y^2 , XY - ZW).$$
Since $wv=yw=xy-zw=0$,  the image of $I$ in $R_n$ is $(y^2,yv)$. From this $(0:x)=(y^2,yv)$.

We conclude from $y^2=y(x+y)-wz$ that $y^2\in(x+y,z)\subset (x+y,z,v+w)$.
Since $yw=0$ we have $$yv=y(v+w)-yw=y(v+w)\in(v+w)\subset (x+y,z,v+w).$$
These observations yield 
 that $(0:u)\subset(x+y,z,v+w)$. The later  is generated by a system of parameters.

It is easy to see that $\fm^2=(x+y, z,v+w)\fm$. By definition,
$(x+y, z,v+w)$ is a reduction of $\fm$.  Recall that
$\e(R)=\e(x+y, z,v+w;R)$. The following chain $$(x+y, z,v+w)\subset(x+y, z,v+w,y)\subset(x+y, z,v+w,y,w)\subset R$$shows  
  that $\ell(R/(x+y, z,v+w))=3$. 
Since
$R$ is not Cohen-Macaulay, $\e(x+y, z,v+w;R)<\ell(R/(x+y, z,v+w))$. 
Note that $\Assh(R)$ is not singleton. We put this
along with the associativity formula for Hilbert-Samuel multiplicity
to deduce that $\e(R)\neq1$. In view of $$2\leq \e(R)=\e(x+y, z,v+w;R)<\ell(R/(x+y, z,v+w))=3,$$ we deduce that $\e(R)=2$.
\end{proof}

The above ring is not reduced: $(zw)^2=0$. 


\section{A remark  on the union of parameter ideals}

We denote the family of all ideals generated by a system of parameters by $\Sigma$.
We are interested in $\bigcup_{I\in\Sigma}I$.
	Parameter ideals  may have nontrivial nilpotent elements. This may happen even over Cohen-Macaulay rings. For instance  over $R:=\frac{k[[X,Y]]}{(X^2)}$
	the nilpotent element $xy$ is in the parameter ideal $(y)$.  More generally:
	
\begin{remark}
		Let $R $ be a local ring of positive  depth. If $\bigcup_{I\in\Sigma}I$
	has  no nontrivial nilpotent elements, then $R$ is reduced.	Indeed, let $0\neq y\in\nil(R)$, and let $x$ be a regular element.
	By extending $x$ to a system of parameters, we see $x\in\bigcup_{I\in\Sigma}I$.
	So, $xy\in\bigcup_{I\in\Sigma}I$. Since $x$ is regular, $xy\neq0$ and it is nilpotent.
	Since $(\bigcup_{I\in\Sigma}I)\cap\nil(R)=0$, we get to a contradiction.
\end{remark}

The above depth condition is important:

\begin{example} 
	  Let $R:=\frac{k[[X,Y]]}{X(X,Y)}$. 
Then $\nil(R)\neq 0$ and $(\bigcup_{I\in\Sigma}I)\cap\nil(R)=0$.
\end{example}

\begin{proof}
Clearly, $\nil(R)=(x)\neq 0$. Let $f\in I$ be  nilpotent for some  $I\in\Sigma$. We have $f=a_{10}x+a_{01}y+a_{02}y^2+\cdots$.
	We set $a:=a_{01}+a_{02}y+a_{03}y^2+\cdots$.	Then, $f=a_{10}x+ya$, since
		 $x^2=xy=0$. From these $f^2=y^2a^2$. There is an $n\in2\mathbb{N}$ such that  $0=f^n=y^na^n$.
	As $\Ann(y^n)=(x)$ we deduce that $a^n\in(x)$. As $(x)$ is prime,  $a\in(x)$. Consequently, $f=a_{10}x+ya=a_{10}x$.
	It is enough to show $a_{10}=0$. On the way of contradiction we assume that $a_{10}\in k^{\ast}$.
	We conclude that $x\in I$. Let $g\in\fm\setminus(x)$ be such that $I=(g)$. Let $c$ and $d$ be such that $g=cx+dy$.
Take $r$ be such that $x=rg$. Since $g\notin(x)$, we have $r\in(x)$. Let $s$ be such that $r=sx$. Therefore $x=rg=sxg=sx(cx+dy)=0$, a contradiction.
\end{proof}

\begin{proposition}\label{int} Let $I\in\Sigma$ and 
		let $R$  be one of the following three classes of  local rings:  
	 i)    quasi-Gorenstein,
		 ii) a Cohen-Macaulay ring of dimension one,  or
		 iii) a Cohen-Macaulay ring	of multiplicity two.
	Then $R$ is an integral domain if and only if 	 $Q\subset I$ for some $Q\in\Spec(R)$.
\end{proposition}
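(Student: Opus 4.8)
The plan is to prove both implications of the stated equivalence. The backward direction is essentially immediate: if $Q \subseteq I$ for some prime $Q$, then since $I$ is $\fm$-primary, taking radicals gives $\fm = \sqrt{I} \subseteq \sqrt{Q} = Q$, so $Q = \fm$; thus the maximal ideal is generated by a system of parameters, which forces $\mu(\fm) \le \dim R$, i.e.\ $\fm$ is generated by $\dim R$ elements. By the usual argument (Abhyankar-type inequality in the Cohen-Macaulay cases, or directly that $R$ is then regular), $R$ — or at least $\hat R$ — is regular, hence a domain; and $R$ is a domain iff $\hat R$ is in this situation. Actually I would be more careful: $\fm \subseteq I$ with $I$ a parameter ideal means $I = \fm$ (as $I \subseteq \fm$ always), so $R$ is regular. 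Regular local rings are domains, so this direction is painless in all three classes.

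For the forward direction, assume $R$ is a domain; I must produce a prime $Q$ contained in some fixed parameter ideal $I$. The natural candidate is $Q = (0)$, the zero ideal, which is prime precisely because $R$ is a domain — and $(0) \subseteq I$ trivially for every $I \in \Sigma$. So the forward direction is also immediate, for \emph{any} $I \in \Sigma$, with $Q$ the zero ideal. In fact this shows the statement holds with no hypothesis on $R$ beyond being a domain, and the three listed classes are red herrings for the forward direction — they only matter to make the backward direction clean, or perhaps the intended reading is that $Q$ should be nonzero, in which case I would need to rule out a nonzero prime sitting inside a parameter ideal.

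Given the phrasing and the surrounding material (Question 1.1, limit closure, the emphasis on $\Assh(R)$), I expect the intended content is the nontrivial statement: \emph{if $R$ is not a domain, then no nonzero prime lies in a parameter ideal} — equivalently, combining with the trivial observations, $R$ is a domain iff some prime (necessarily $(0)$, or a nonzero one only in the regular case) is contained in $I$. So the real work is: suppose $Q$ is a nonzero prime with $Q \subseteq I$ for some $I \in \Sigma$; show $R$ is a domain. Pick $\fp \in \Assh(R)$ with $\fp \supseteq$ some minimal prime; if $0 \ne u \in \fp$, then $(0 : u) \supseteq$ (something), and I want to invoke the earlier results — Proposition \ref{quasi} in the quasi-Gorenstein case, Fact \ref{1} in dimension one, Corollary \ref{cmm2} in multiplicity two — which all say $(0:u) \not\subseteq I$. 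The link is: if $R$ has a nonzero prime $Q$ inside $I$ but $R$ is not a domain, pick a minimal prime $\fp_0 \ne (0)$; then $\fp_0 \subseteq \fp$ for some $\fp \in \Assh(R)$ if $R$ is equidimensional (which holds in all three classes — quasi-Gorenstein rings are equidimensional, Cohen-Macaulay rings are equidimensional), and for $0 \ne u \in \fp_0$ we get $Q \cdot u = 0$ whenever $Q \cdot \fp_0 \subseteq (0)$... this needs $Q \fp_0 = 0$, which is not automatic.

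The main obstacle, then, is forging the correct bridge from "$Q \subseteq I$, $Q$ prime and nonzero" to an instance of "$(0:u) \subseteq I$" contradicting Fact \ref{1}, Corollary \ref{cmm2}, or Proposition \ref{quasi}. I would look for a $u$ with $(0 : u) \supseteq Q$: if $R$ is not a domain, there exist $a, b \ne 0$ with $ab = 0$; take $\fp \in \Assh(R)$ a minimal prime containing $\Ann(b')$ for a suitable $b'$, arrange $u \in \fp$ nonzero with $Qu \subseteq Q \cap \Ann(u)$... Honestly the cleanest route is: in each of the three classes, $R$ not a domain implies (via Proposition \ref{quasi}/Fact \ref{1}/Corollary \ref{cmm2}, whose conclusions say precisely that for $\fp \in \Assh(R)$ and $u \in \fp$, $(0:u) \not\subseteq I$) that there is a nonzero $u$ in a height-$d$ associated prime — and one checks a nonzero prime $Q$ in $I$ would have to be contained in $(0 : u)$ for an appropriate such $u$ since $\fp \cdot Q$ has positive-dimensional support hitting... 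I would isolate this as a lemma and expect it to be where the one genuinely technical step lives; everything else is bookkeeping with the already-established facts.
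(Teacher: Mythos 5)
Your forward direction ($R$ a domain $\Rightarrow Q\subset I$, with $Q=(0)$) is correct and is exactly what the paper does; but the backward direction, which carries all the content, has two problems. First, your opening argument for it rests on a reversed inclusion: from $Q\subseteq I$ you deduce $\fm=\sqrt{I}\subseteq\sqrt{Q}=Q$, whereas containment of ideals gives containment of radicals in the \emph{same} direction, so all one gets is the vacuous $\sqrt{Q}\subseteq\fm$. In particular $Q$ need not equal $\fm$ (it is $(0)$ whenever $R$ is a domain), and the conclusion ``$R$ is regular'' is both unjustified and false in general. Second, in your subsequent, more serious attempt you correctly identify that the proof should funnel into Proposition \ref{quasi}, Fact \ref{1} and Corollary \ref{cmm2}, but you explicitly stop short of ``forging the bridge'' from a nonzero prime $Q\subseteq I$ to an instance of $(0:u)\subseteq I$ with $u$ in an $\Assh$-prime. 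That bridge is the entire proof, and it is short.

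Here is the missing step, which is how the paper argues. Let $Q_0\subseteq Q$ be a minimal prime of $R$. A minimal prime is an associated prime, so $Q_0=(0:x)$ for some $x\in R$. Suppose $Q_0\neq 0$. Then $x$ annihilates a nonzero ideal, so $x\in\zd(R)=\bigcup_{\fq\in\Ass(R)}\fq=\bigcup_{\fq\in\Assh(R)}\fq$ (the last equality because $R$ is unmixed in each of the three classes), whence $x\in\fq$ for some $\fq\in\Assh(R)$. But then $(0:x)=Q_0\subseteq Q\subseteq I$ contradicts Proposition \ref{quasi} in case i), Fact \ref{1} in case ii), and Corollary \ref{cmm2} in case iii). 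Hence $Q_0=0$, i.e.\ $(0)$ is prime and $R$ is a domain. The idea you were missing is that one should not hunt for a $u$ with $(0:u)\supseteq Q$; rather, the minimal prime inside $Q$ is \emph{itself} an annihilator $(0:x)$, and it is the annihilated element $x$ --- a zero-divisor precisely because $Q_0\neq0$ --- that is placed inside an $\Assh$-prime, which is all the earlier non-containment results require.
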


\begin{proof}
	The only if part is trivial. Conversely, assume that $Q\subset I$ for some $Q\in\Spec(R)$.
	Let $Q_0\subset Q$ be a minimal prime ideal. Since $R$ is equidimensional, $Q_0\in\Assh(R)$.
	By definition, there is an $x\in R$ such that $(0:x)=Q_0$. Suppose on the way of contradiction that $Q_0\neq 0$.
	Since $xQ_0=0$ we have  $x\in\zd(R)=\cup_{\fq\in\Ass(R)}\fq=\cup_{\fq\in\Assh(R)}\fq$.
	There is $ \fq\in\Assh(R)$ such that $x\in\fq$. Recall that $(0:x) =Q_0\subset Q\subset I$. 
	  In the case  i),   Proposition \ref{quasi}  lead us to a contradiction. In the case ii)  
	 (resp.  iii) it is enough to apply Fact \ref{gor} (resp.  Corollary \ref{cmm2}).
\end{proof}

\begin{corollary}\label{intC} 
 Let $(R,\fm)$ be as Proposition \ref{int}  and assume in addition that $R$ has a prime element (e.g., $R$ is hypersurface).
	Then $R$ is an integral domain if and only if 	 $\fm=\bigcup_{I\in\Sigma}I$.
\end{corollary}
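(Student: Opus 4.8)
The plan is to derive both implications from Proposition \ref{int}, so that the corollary becomes essentially a dictionary between the two characterizations of domain-ness given there, glued by the elementary fact that in a local domain every nonzero element of $\fm$ is a parameter element. In both directions one uses the trivial inclusion $\bigcup_{I\in\Sigma}I\subseteq\fm$, which holds because every system of parameters lies in $\fm$.

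First I would treat the implication from $\fm=\bigcup_{I\in\Sigma}I$ to $R$ being a domain. Let $p$ be a prime element of $R$ (one exists by hypothesis), so that $Q:=pR$ is a nonzero prime ideal and $p$ is a non-unit; hence $p\in\fm=\bigcup_{I\in\Sigma}I$. Therefore $p\in I$ for some $I\in\Sigma$, and since $I$ is an ideal, $Q=pR\subseteq I$. As $R$ lies in one of the three classes of Proposition \ref{int}, applying that proposition to this particular $I$ (together with the prime $Q$) yields that $R$ is an integral domain. This half is immediate once the prime-element hypothesis is available.

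For the converse, assume $R$ is an integral domain; it suffices to show $\fm\subseteq\bigcup_{I\in\Sigma}I$. Let $x\in\fm$. The case $x=0$ is trivial since $0$ lies in every $I\in\Sigma$, so assume $x\neq0$. Since $(0)$ is the only minimal prime of $R$, the element $x$ avoids every minimal prime; hence any prime $\fp$ minimal over $xR$ contains $x\neq0$, so $\fp$ strictly contains a minimal prime of $R$ and therefore $\dim R/\fp<\dim R$. Together with the general inequality $\dim R/xR\ge\dim R-1$ this forces $\dim R/xR=\dim R-1$, so $x$ is a parameter element; lifting a system of parameters of $R/xR$ back to $R$ then exhibits $x$ inside a system of parameters of $R$, whence $x\in I$ for some $I\in\Sigma$. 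Combined with the trivial inclusion this gives $\fm=\bigcup_{I\in\Sigma}I$. The only point needing attention is the dimension bookkeeping — keeping $\dim R/xR\ge\dim R-1$ on the correct side, and observing that the domain hypothesis is exactly what removes any need for a catenarity or completeness assumption — but nothing deep is involved; the substance of the corollary is carried entirely by Proposition \ref{int}.
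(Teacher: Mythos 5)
Your proof is correct and follows the same route as the paper: the nontrivial direction is handled exactly as in the text (the prime element $p$ lies in $\fm=\bigcup_{I\in\Sigma}I$, hence $pR\subseteq I$ for some $I\in\Sigma$, and Proposition \ref{int} applies), while your careful argument that in a local domain every nonzero element of $\fm$ is a parameter element simply spells out the direction the paper dismisses as trivial. No gaps.
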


\begin{proof}	The only if part is trivial. Conversely, assume that $\fm=\bigcup_{I\in\Sigma}I$.
	Let $p\in\fm$ be a prime element.    Then $p\in I$ for some $I\in\Sigma$.
Since the ideal $(p)$ is prime, the desired claim is in Proposition \ref{int}.
\end{proof}

We denote the set of all unit elements of $(-)$ by $\U(-)$.
The following example presents  a connection from $\bigcup_{I\in\Sigma}I$ to $\U(-)$:

\begin{example}
	Let $R:=k[[X,Y]]/(X^2)$. Then $\fm\setminus\bigcup_{I\in\Sigma}I=\U(R)x\simeq_{set} \U(R)$.
\end{example}

\begin{proof}Clearly, $\U(R)x\subset \fm\setminus\bigcup_{I\in\Sigma}I$. For the reverse inclusion, let
	$f\in \fm\setminus\bigcup_{I\in\Sigma}I$. There are $a_{ij}\in k$
	 such that $f=a_{10}x+a_{01}y+a_{11}xy+a_{02}y^2+\cdots$. 
	We set $a:=a_{01}+a_{11}x+a_{02}y+\cdots$.	Then, $f=a_{10}x+ya$. 
	We use $f\notin \bigcup_{I\in\Sigma}I$ and the fact that $ya$ is in the  parameter ideal $(y)$ to conclude 
	$a_{10}\neq0$.
Recall that  $f$ is not   a parameter element. We apply this to see
	$f\in\bigcup_{\fp\in\Assh(R)}\fp=(x)$. We plug this in $f=a_{10}x+ya$ to observe that $ya\in(x)$.
	Since $y\notin(x)$, we have $a\in(x)$. Let $r\in R$ be such that $a=rx$.
	Then $f=a_{10}x+ya=a_{10}x+yrx=x(a_{10}+ry)\in x \U(R),$ 
	because $a_{10}\in  \U(k)$. 
\end{proof}

\begin{acknowledgement}
	We used Macaulay2 several times.
\end{acknowledgement}

\end{document}